\newtheorem{Theorem}{Theorem}
\newtheorem{Proposition}[Theorem]{Proposition}
\newtheorem{Lemma}[Theorem]{Lemma}
\newtheorem{Definition}[Theorem]{Definition}
\newtheorem{Remark}[Theorem]{Remark}
\newcommand{\vol}{\operatorname{Vol}}
\newcommand{\spn}{\operatorname{span}}
\newcommand{\supp}{\operatorname{supp}}
\newcommand{\lex}{\operatorname{lex}}
\newcommand{\coord}{\operatorname{coord}}
\newcommand{\ex}{\operatorname{ex}}
\newcommand{\Kh}{\operatorname{Kh}}
\newcommand{\Str}{\operatorname{Str}}
\newcommand{\mat}{\operatorname{mat}}
\newcommand{\rep}{\operatorname{rep}}
\newcommand{\wt}{\operatorname{wt}}
\newcommand{\eps}{\varepsilon}
\title{Improved stability for the size and structure of sumsets}
\author{Andrew Granville, Jack Smith, and Aled Walker}
\begin{document}
\begin{abstract}
Let $A \subset \mathbb{Z}^d$ be a finite set. It is known that the sumset $NA$ has   predictable size ($\vert NA\vert = P_A(N)$ for some $P_A(X) \in \mathbb{Q}[X]$) and structure (all of the lattice points in some finite cone other than all of the lattice points in a finite collection of exceptional subcones), once $N$ is larger than some threshold. In previous work, joint with Shakan, the first and third named authors established the first effective bounds for both of these thresholds for an arbitrary set $A$. In this article we substantially improve each of these bounds, coming much closer to the corresponding lower bounds known. 
\end{abstract}
\maketitle

\section{Introduction}
Let $A \subset \mathbb{Z}^d$ be a finite set, and for each positive integer $N$ consider the sumset \[NA:= \{a_1 + \cdots + a_N: \, a_i \in A \text{ for all }  i\}.\] When $N$ is sufficiently large,  $NA$ becomes rigidly structured. In this article we study two indicators of such structure, establishing that the values of $N$ which are ``sufficiently large'' are not too large (and indeed are near to what we would guess are the smallest such $N$).

The first notion involves the size $\vert NA\vert$. Start with the convex hull of $A$, denoted \[ H(A): = \Big\{ \sum_{a \in A} c_a a: \, \text{Each } c_a \in \mathbb{R}_{ \geqslant 0}, \, \sum_{a \in A} c_a = 1\Big\}.\]  Certainly $NA \subset NH(A) \cap \mathbb{Z}^d$, and therefore $\vert NA\vert \leqslant \vert NH(A) \cap \mathbb{Z}^d\vert$.  Ehrhart showed  (\cite{E62}  \cite[Theorem 3.8]{BR15}) that there is a polynomial $R_A \in \mathbb{Q}[X]$ of degree at most $d$ for which 
\[ \vert NH(A) \cap \mathbb{Z}^d \vert = R_A(N)\] 
for all positive integers $N$. Therefore 
$\vert NA\vert \leq R_A(N)$ for all $N$ but one can readily find examples for which $\vert NA\vert < R_A(N)$ for all $N$: for instance, if $d=1$ and $A = \{0,3,5\}$ then $H(A) = [0,5]$ and   $\vert NA\vert = 5N - 5<R_A(N) = 5N + 1$ for all $N \geqslant 3$. 

Even though $\vert NA\vert$ is not equal to the Ehrhart polynomial in this example, it is still equal to a polynomial in $N$ once $N$ is sufficiently large. This was established when $A \subset \mathbb{Z}$ by Nathanson \cite{N72}, using an explicit combinatorial argument and, remarkably, this holds in arbitrary dimension: 

\begin{Theorem}[Khovanskii \cite{K92}]
\label{Theorem:Khovanskii}
Let $A \subset \mathbb{Z}^d$ be finite. There is a polynomial $P_A \in \mathbb{Q}[X]$ of degree at most $d$, and a threshold $N_{\Kh}(A)$, such that $\vert NA\vert = P_{A}(N)$ provided $N \geqslant N_{\Kh}(A)$. 
\end{Theorem}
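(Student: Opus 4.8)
The plan is to recast $|NA|$ as the Hilbert function of a finitely generated graded algebra and then invoke the classical theory of Hilbert polynomials. First I would homogenize: writing $A = \{a_0,\dots,a_k\}$, set $\widetilde{A} = \{(a_i,1): 0 \le i \le k\} \subset \mathbb{Z}^{d+1}$, so that $N\widetilde{A} = (NA)\times\{N\}$ and hence $|N\widetilde{A}| = |NA|$ for every $N \ge 1$. Let $S = \bigcup_{N\ge 0} N\widetilde{A}$ (with $0\widetilde{A}:=\{0\}$), a finitely generated submonoid of $\mathbb{Z}^{d+1}$ graded by the last coordinate, whose degree-$N$ piece is $S_N = N\widetilde{A}$. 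Fix a field $\Bbbk$ and form the semigroup algebra $\Bbbk[S] = \bigoplus_{s\in S}\Bbbk\cdot t^s$: it is a standard-graded $\Bbbk$-algebra, generated by the degree-one elements $t^{(a_i,1)}$, with $\dim_{\Bbbk}\Bbbk[S]_N = |S_N| = |NA|$. Concretely $\Bbbk[S] \cong \Bbbk[x_0,\dots,x_k]/I_A$ where $I_A$ is the toric (binomial) ideal of relations among the $t^{(a_i,1)}$ and each $x_i$ has degree $1$.

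Next I would apply the Hilbert--Serre theorem (equivalently, Hilbert's theorem on Hilbert polynomials): the Hilbert function $N \mapsto \dim_{\Bbbk}\Bbbk[S]_N$ of a finitely generated standard-graded algebra agrees, for all sufficiently large $N$, with a polynomial $P_A \in \mathbb{Q}[X]$, whose coefficients are automatically rational because it takes integer values at all large integers. This yields $|NA| = P_A(N)$ for $N$ past some threshold $N_{\Kh}(A)$. For the degree bound one uses $\deg P_A = \dim \Bbbk[S] - 1$ together with the standard fact that $\dim \Bbbk[S]$ equals the rank of the group $\mathbb{Z}\widetilde{A}$ generated by $\widetilde{A}$, which is $1 + \dim H(A) \le d+1$; hence $\deg P_A \le d$.

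If one prefers to avoid citing commutative algebra, the same argument can be made hands-on, generalizing Nathanson's one-dimensional count: $|NA|$ equals the number of classes of the lattice simplex $\{n\in\mathbb{Z}_{\ge 0}^{k+1}: \sum_i n_i = N\}$ under $n\sim n'$ iff $n-n' \in \ker(\mathbb{Z}^{k+1}\to\mathbb{Z}^d)$, and choosing the $\preceq$-minimal representative in each class for a fixed monomial order $\preceq$ identifies $|NA|$ with the number of degree-$N$ monomials lying outside a fixed monomial ideal; an elementary induction on the number of variables (decomposing the complementary staircase into finitely many translated coordinate subspaces) shows this count is eventually polynomial of degree one less than the largest such subspace dimension. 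The real point to appreciate is that this theorem alone is insufficient for applications: both the Hilbert--Serre argument and its combinatorial avatar are \emph{non-effective}, producing $N_{\Kh}(A)$ with no control on its size, and supplying such control is precisely what the rest of this paper is about.
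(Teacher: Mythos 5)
Your proposal is correct. Your lead argument --- homogenise to $\widetilde{A}=A\times\{1\}$, form the semigroup algebra of $\bigcup_{N}N\widetilde{A}$ graded by the last coordinate, and apply Hilbert--Serre, with the degree bound coming from the Krull dimension equalling $\rank \mathbb{Z}\widetilde{A}=1+\rank\Lambda_{A-A}\leqslant d+1$ --- is precisely Khovanskii's original route, which the paper cites but deliberately does not execute, because Hilbert's theorem gives no control on the threshold. The proof the paper actually runs (Proposition \ref{Nath-Ruz}) is the Nathanson--Ruzsa argument, which is your ``hands-on'' alternative: identify $\vert NA\vert$ with the number of weight-$N$ points of $\mathbb{Z}_{\geqslant 0}^{\ell}$ avoiding the up-set $\mathcal{U}$ of lexicographically non-minimal representatives, invoke the Mann--Dickson Lemma to obtain the finite set $\mathcal{M}$ of minimal elements, and count the complement. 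The only substantive difference is the final counting step: the paper uses inclusion--exclusion over subsets $T\subset\mathcal{M}$, yielding the explicit alternating sum of binomial coefficients that is visibly a polynomial once $h\geqslant \max_T \wt(m_T)-\ell+1$ (this explicit threshold is exactly what Section 2 then makes effective by bounding $\Vert m\Vert_{\infty}$ for $m\in\mathcal{M}$), whereas you propose a staircase (Stanley) decomposition of the complement into translated coordinate orthants, which gives eventual polynomiality equally well but with a less transparent threshold. One small point: the combinatorial route gives degree at most $\ell-1$ directly, and the bound $\deg P_A\leqslant d$ must then be recovered from $\vert NA\vert\leqslant R_A(N)$ with $R_A$ the Ehrhart polynomial of $H(A)$; your Krull-dimension computation handles this cleanly. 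Your closing observation --- that both routes are non-effective as stated and that bounding $N_{\Kh}(A)$ is the real content of the paper --- is exactly right.
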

Khovanskii's proof related the sequence $N \mapsto \vert NA\vert$ to the Hilbert function of a certain graded module over the polynomial ring $\mathbb{C}[X_1,\dots,X_{\ell}]$ (where $\ell = \vert A\vert$), and so agrees with  the Hilbert polynomial of the graded module once $N\geq N_{\Kh}(A)$. But Hilbert's proof \cite[Theorem 1.11]{E95} does not yield an explicit bound on $N_{\Kh}(A)$. Nathanson and Ruzsa \cite{NR02} later gave a combinatorial proof of Theorem \ref{Theorem:Khovanskii}, but this did not provide an effective bound on $N_{\Kh}(A)$ either, relying on the following well-known principle (proved in \cite[Lemma 5]{GS20}, say). 
\begin{Lemma}[The Mann--Dickson Lemma] 
\label{Lemma:Mann_Dickson}
For any $S \subset \mathbb{Z}^d_{ \geqslant 0}$ there exists a finite subset $S_{\min} \subset S$ such that for all $s \in S$ there exists $x \in S_{\min}$ with $s - x \in \mathbb{Z}^d_{\geqslant 0}$. 
\end{Lemma}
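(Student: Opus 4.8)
The plan is to prove the statement — which is Dickson's Lemma — by induction on the dimension $d$. The base case $d = 1$ is immediate: a nonempty $S \subseteq \mathbb{Z}_{\geqslant 0}$ has a least element $m$, and $S_{\min} = \{m\}$ works since $s - m \geqslant 0$ for every $s \in S$ (and if $S = \emptyset$ take $S_{\min} = \emptyset$).

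For the inductive step, suppose the result holds in dimension $d-1$, and let $\emptyset \neq S \subseteq \mathbb{Z}^d_{\geqslant 0}$. Fix any $x_0 = (a_1, \dots, a_d) \in S$. The key observation is that if $s \in S$ does not satisfy $s - x_0 \in \mathbb{Z}^d_{\geqslant 0}$, then $s_i < a_i$ for at least one coordinate $i$, so that $s_i$ lies in the finite set $\{0, 1, \dots, a_i - 1\}$. This confines the ``bad'' elements of $S$ to finitely many coordinate hyperplane slices: for each $i \in \{1, \dots, d\}$ and each $v \in \{0, \dots, a_i - 1\}$, put $S_{i,v} = \{s \in S : s_i = v\}$. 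Forgetting the (now constant) $i$-th coordinate identifies $S_{i,v}$ injectively with a subset of $\mathbb{Z}^{d-1}_{\geqslant 0}$, so by the inductive hypothesis there is a finite $T_{i,v} \subseteq S_{i,v}$ with the required domination property inside that slice.

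I would then take
\[ S_{\min} = \{x_0\} \cup \bigcup_{i=1}^d \bigcup_{v=0}^{a_i - 1} T_{i,v}, \]
a finite union of finite sets, hence finite. To verify it, let $s \in S$. If $s - x_0 \in \mathbb{Z}^d_{\geqslant 0}$ we are done with $x = x_0$. Otherwise $s_i < a_i$ for some $i$, so $s \in S_{i,s_i}$; the inductive hypothesis applied to that slice produces $x \in T_{i,s_i}$ agreeing with $s$ in coordinate $i$ and with $s - x \geqslant 0$ in the other $d-1$ coordinates, so that $s - x \in \mathbb{Z}^d_{\geqslant 0}$ overall.

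There is no genuine obstacle here; the only point requiring a little care is the bookkeeping in the inductive step — recognising that failure to dominate the fixed element $x_0$ forces some coordinate of $s$ to be \emph{bounded} below $a_i$, so only finitely many slices matter, and then noting that a dominating element found inside a slice automatically dominates in the full space because the shared coordinate contributes a zero to the difference. A more streamlined alternative would be to cite that $(\mathbb{Z}^d_{\geqslant 0}, \leqslant)$ is a well-quasi-order, but unpacking that statement essentially reproduces the same induction, so I would present the direct argument above.
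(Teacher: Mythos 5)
Your proof is correct: fixing one element $x_0\in S$, observing that any $s\in S$ not dominated by $x_0$ must have some coordinate $s_i<a_i$ and hence lie in one of finitely many hyperplane slices, and then applying the inductive hypothesis in dimension $d-1$ to each slice is the standard induction-on-dimension proof of Dickson's Lemma, and every step (including the point that the shared $i$-th coordinate contributes $0$ to the difference $s-x$) checks out. Note that the paper itself does not prove this lemma but simply cites \cite[Lemma 5]{GS20}, so there is no in-paper argument to compare against; your self-contained proof is a perfectly good substitute for that citation.
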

\noindent Prior to 2021, explicit bounds were known when $d=1$ (\cite{N72, WCC11, GS20, GW21}, with the strongest results in \cite{L22}); when $H(A)$ is a $d$-simplex (\cite{CG21}, with a refinement in \cite{GSW23}); or when $H(A)$ is $d$-dimensional and $\vert A\vert = d+1$ or $d+2$ (\cite{CG21}, with a refinement in \cite{GSW23}). 

 The first and third named authors, with Shakan in  \cite{GSW23}, proved the first effective bounds for arbitrary $d$ and arbitrary $A \subset \mathbb{Z}^d$ showing in \cite[Theorem 1.1]{GSW23}  that   
 \begin{equation}
\label{eq:first_Khov_bound}
N_{\Kh}(A) \leqslant (2 \vert A\vert \cdot \operatorname{width}(A))^{(d+4)\vert A\vert},
\end{equation}
where $\operatorname{width}(A) := \max_{a_1,a_2 \in A} \Vert a_1 - a_2\Vert_{\infty}$ is the `width' of $A$.  The proof used a complicated explicit linear algebra argument to bound $\vert S_{\min}\vert$ in the cases that the Nathanson--Ruszsa argument required.  

By returning to Khovanskii's original approach, and adapting techniques in Gr\"{o}bner bases from \cite[Chapter 4]{S96} as applied to toric ideals, we have been able to greatly improve \eqref{eq:first_Khov_bound}. To state the new bound, we define two quantities that will occur frequently throughout.

\begin{Definition}
If $A = \{a_1,\dots,a_{\ell}\} \subset \mathbb{Z}^d$, we define

\begin{align*}
\vol^{\dag,\max}(H(A)) &:= \max_{\{ i_0, i_1,\dots, i_{d}\}\subset \{1,\dots,\ell\}} \vert \det ( a_{i_1}-a_{i_0} ,  \cdots  ,  a_{i_{d}}-a_{i_0})\vert\\
\vol^{\dag,\min}(H(A)) &:= \min\limits_{\substack{\{ i_0, i_1,\dots, i_{d}\}\subset \{1,\dots,\ell\} \\   \det ( a_{i_1}-a_{i_0} ,  \cdots  ,  a_{i_{d}}-a_{i_0})  \ne 0}} \vert \det ( a_{i_1}-a_{i_0} ,  \cdots  ,  a_{i_{d}}-a_{i_0})\vert.
\end{align*}
\end{Definition}

\begin{Remark}
\emph{Note that $\frac{1}{d!}\vol^{\dag,\max}(H(A))$ is equal to the volume of the largest $d$-simplex subtended by elements of $A$. In particular $\vol^{\dag,\max}(H(A)) \leqslant d! \vol(H(A))$. By Hadamard's inequality we also have $\vol^{\dag,\max}(H(A)) \leqslant d^{d/2} \operatorname{width}(A)^d$. }
\end{Remark}

Letting $\Lambda_{A-A} \subset \mathbb{Z}^d$ denote the lattice generated by $A-A$, our first result is as follows.

\begin{Theorem}[Improved Khovanskii threshold]
\label{Theorem:improved_Khovanskii}
Suppose that $A \subset \mathbb{Z}^d$ is finite and that  $\Lambda_{A-A}$ is $d$-dimensional. Then
\begin{equation}
\label{eq:improved_Khovanskii}
N_{\Kh}(A) \leqslant \vert A\vert^2 \vol^{\dagger, \max}(H(A)) - \vert A\vert + 1.
\end{equation}
\end{Theorem}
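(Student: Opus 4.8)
\emph{Strategy of proof.}
Write $\ell:=\vert A\vert$ and $A=\{a_{1},\dots,a_{\ell}\}$, and homogenise by setting $\hat a_{i}:=(a_{i},1)\in\mathbb{Z}^{d+1}$. In the polynomial ring $R:=\mathbb{C}[x_{1},\dots,x_{\ell}]$ with its standard grading, let $I:=I_{\hat A}\subset R$ be the toric ideal, i.e.\ the kernel of the graded $\mathbb{C}$-algebra homomorphism $R\to\mathbb{C}[t_{1}^{\pm1},\dots,t_{d+1}^{\pm1}]$, $x_{i}\mapsto t^{\hat a_{i}}$. The first step is the standard observation that
\[
\vert NA\vert=\dim_{\mathbb{C}}(R/I)_{N}\qquad(N\geqslant1):
\]
a degree-$N$ monomial $x^{u}$ maps to $t^{\,\sum_{i}u_{i}\hat a_{i}}$, and two such monomials have the same image precisely when they encode the same point of $NA$ (the last coordinate of $\hat a_{i}$ records the total degree), so the degree-$N$ piece of the image has dimension $\vert NA\vert$. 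Since $\Lambda_{A-A}$ is $d$-dimensional, $\mathbb{Z}\hat A$ has rank $d+1$, so $R/I$ has Krull dimension $d+1$ and its Hilbert polynomial $P$ has degree $d$; this $P$ is Khovanskii's $P_{A}$, and $N_{\Kh}(A)$ is the least $N_{0}$ such that the Hilbert function of $R/I$ equals $P$ in all degrees $\geqslant N_{0}$.

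Fix an arbitrary term order $\prec$ on $R$ and pass to the monomial ideal $J:=\operatorname{in}_{\prec}(I)$. By Macaulay's theorem the Hilbert functions of $R/I$ and $R/J$ coincide in every degree (hence so do their Hilbert polynomials), so it suffices to bound the stabilisation threshold of $R/J$. That threshold is at most $\reg(R/J)+1$, because the Hilbert function of a finitely generated graded module agrees with its Hilbert polynomial in every degree exceeding the Castelnuovo--Mumford regularity. Moreover, for a monomial ideal in $\ell$ variables all of whose minimal generators have degree at most $D$, a careful induction on the number of variables (via the short exact sequence splitting off one variable) gives $\reg(R/J)\leqslant\ell(D-1)$, which is sharp for $J=(x_{1}^{D},\dots,x_{\ell}^{D})$. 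Hence
\[
N_{\Kh}(A)\ \leqslant\ \ell(D-1)+1,\qquad D:=\max\{\deg g:\ g\ \text{a minimal generator of}\ J\},
\]
and everything comes down to bounding $D$, i.e.\ the largest degree of a binomial in the reduced Gr\"obner basis of $I$ with respect to $\prec$.

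This last bound is the heart of the matter, and is where the toric techniques of \cite[Chapter 4]{S96} enter. A binomial $x^{u}-x^{v}$ in the reduced Gr\"obner basis corresponds to a primitive integer kernel vector $u-v$ of $\hat A$, and its degree is the common height $\sum_{i}u_{i}=\sum_{i}v_{i}$ of the lattice point $p:=\sum_{i}u_{i}\hat a_{i}$. Expanding $p$ through a triangulation of the cone $\mathbb{R}_{\geqslant0}\hat A$ into simplicial cones spanned by $(d+1)$-subsets $\{\hat a_{i_{0}},\dots,\hat a_{i_{d}}\}$ — each of normalised volume $\vert\det(\hat a_{i_{0}},\dots,\hat a_{i_{d}})\vert=\vert\det(a_{i_{1}}-a_{i_{0}},\dots,a_{i_{d}}-a_{i_{0}})\vert\leqslant\vol^{\dagger,\max}(H(A))$ — and clearing denominators in the resulting Carath\'eodory expansions, one controls how large this height can be before $x^{u}-x^{v}$ is forced to factor through a strictly smaller relation, contradicting membership in a reduced Gr\"obner basis. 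Carried out carefully — most plausibly with $\prec$ chosen to refine a well-adapted (e.g.\ pulling) regular triangulation, so that the non-unimodular ``excess'' measured by the subdeterminants is confined to the few non-squarefree generators — this should yield $D\leqslant\ell\cdot\vol^{\dagger,\max}(H(A))=\vert A\vert\cdot\vol^{\dagger,\max}(H(A))$. Substituting into the displayed inequality gives
\[
N_{\Kh}(A)\ \leqslant\ \ell\bigl(\ell\,\vol^{\dagger,\max}(H(A))-1\bigr)+1\ =\ \vert A\vert^{2}\,\vol^{\dagger,\max}(H(A))-\vert A\vert+1,
\]
which is the claimed bound.

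The decisive — and essentially the only substantial — step is the Gr\"obner-basis degree estimate $D\leqslant\vert A\vert\cdot\vol^{\dagger,\max}(H(A))$: one needs \emph{linear} dependence on $\vert A\vert$ (rather than quadratic or worse) and \emph{linear} dependence on the largest subdeterminant, which calls for a sharp quantitative form of the Carath\'eodory/denominator-clearing estimates of \cite[Chapter 4]{S96}, treating the non-unimodular simplices one at a time so that each contributes at most its own volume. The homogenisation and the hypothesis that $\Lambda_{A-A}$ be $d$-dimensional are used precisely to guarantee that $R/I$ has Krull dimension $d+1$, so that $P_{A}$ genuinely has degree $d$ and the reductions above are legitimate.
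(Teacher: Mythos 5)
Your high-level architecture is essentially the paper's own: our proof is a linear-algebra rewriting of exactly this toric-ideal argument (the set $\mathcal{U}$ is the initial ideal, $\mathcal{M}$ its minimal generators, and Proposition \ref{Nath-Ruz} is the Taylor-complex inclusion--exclusion for the Hilbert function of $R/\operatorname{in}_\prec(I)$). Your reductions down to the generator-degree bound are sound: the identification $\vert NA\vert = \dim(R/I)_N$, Macaulay's theorem, and $\reg(R/J)\leqslant \ell(D-1)$ for a monomial ideal generated in degrees $\leqslant D$ (the last follows directly from the Taylor resolution, since $\deg(\operatorname{lcm}_{m\in T} m)-\vert T\vert \leqslant \min(\vert T\vert,\ell)D-\vert T\vert\leqslant \ell(D-1)$). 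The genuine gap is the step you yourself flag as decisive: the bound $D\leqslant \vert A\vert\,\vol^{\dag,\max}(H(A))$ on the \emph{degree} of reduced Gr\"obner basis elements is asserted (``carried out carefully \dots this should yield''), not proved, and as stated it is stronger than what the circuit/Carath\'eodory machinery delivers. What that machinery gives (Lemmas \ref{Lemma circuit height bound}--\ref{Theorem key bound on primitive elements}) is an $\infty$-norm bound $\Vert m\Vert_\infty\leqslant \ell\,\vol^{\dag,\max}$ for each minimal generator $m$; since each circuit has support of size at most $d+2$, the resulting \emph{degree} bound is only about $\tfrac12(d+2)\,\ell\,\vol^{\dag,\max}$ (the Graver-basis degree bounds in \cite[Chapter 4]{S96} carry the same extra factor of $d$), and feeding that into $\reg(R/J)\leqslant\ell(D-1)$ loses a factor of roughly $d/2$ against \eqref{eq:improved_Khovanskii}.

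Two repairs are needed. First, do not route the threshold through $\max_m\deg(m)$ at all: the inclusion--exclusion formula already shows the Hilbert function stabilises once $N\geqslant \deg(\operatorname{lcm}(\text{gens}))-\ell+1$, and
\[
\deg(\operatorname{lcm}(\text{gens}))=\sum_{i=1}^{\ell}\max_{m\in\mathcal{M}} m_i\ \leqslant\ \ell\max_{m\in\mathcal{M}}\Vert m\Vert_\infty\ \leqslant\ \ell^{2}\,\vol^{\dag,\max}(H(A)),
\]
which is exactly \eqref{eq:improved_Khovanskii}; this is how the paper concludes. Second, even the $\infty$-norm bound needs an ingredient absent from your sketch: one must exploit \emph{minimality} of the generator (Lemma \ref{Lemma minimal useless elements}) to force every coefficient in the conformal circuit decomposition of $u-v$ to be at most $1$; without that rigidity, clearing denominators in Carath\'eodory expansions controls only the shape, not the size, of the relation, and the coefficients (hence the height) are a priori unbounded. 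A choice of pulling triangulation does not substitute for this step.
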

\noindent Our proof is motivated by notions in algebraic geometry (as in \cite[Chapter 4]{S96}), but we present a simpler, more-or-less equivalent, formulation using only linear algebra. This will be useful when considering the second notion of structure for $NA$, discussed below. 

Theorem \ref{Theorem:improved_Khovanskii} implies the upper bounds\[N_{\Kh}(A) \leqslant d! \vert A\vert^2 \vol(H(A)) - \vert A\vert + 1\] and \[N_{\Kh}(A) \leqslant \vert A\vert^2 d^{d/2} \operatorname{width}(A)^d - \vert A\vert + 1,\] indicating the scale of improvement over \eqref{eq:first_Khov_bound}. These may be compared with lower bounds. For example, when $\vert A\vert = d+2$ and $\Lambda_{A-A}= \mathbb{Z}^d$ it was shown in \cite[Theorem 1.2]{CG21} that $N_{\Kh}(A) = d! \vol(H(A)) - d - 1$. This means that Theorem \ref{Theorem:improved_Khovanskii} is optimal up to the $\vert A\vert^2$ term (and the $\vert A\vert^2$ term in \eqref{eq:improved_Khovanskii} cannot be replaced by $1$, as $ \vol^{\dagger, \max}(H(A)) < d!\vol(H(A))$ for some sets with $\vert A\vert = d+2$).

The bound on $N_{\Kh}(A)$ is related to an influential conjecture in algebraic geometry called the Eisenbud--Goto regularity conjecture \cite{MP18}. Though now known to be false in full generality, the conjecture may still be true for projective toric varieties, which is the relevant case for bounding $N_{\Kh}(A)$. A proof of this case of the conjecture would imply $N_{\Kh}(A) \leqslant d! \vol(H(A)) - \vert A\vert + O_d(1)$ which, given the above comments on the bounds when $\vert A\vert = d + 2$,  would be essentially optimal. We direct the interested reader to \cite[Conjectures 4.1 and 4.2]{S96a}, also available at \cite{S96b}, and to \cite[Chapter 4]{S96}. 

For the second notion of structure, we consider   the inclusion $NA \subset NH(A) \cap \mathbb{Z}^d$ in more detail, an inclusion introduced in \cite{GS20}, with antecedents in Khovanskii's original paper \cite{K92}. Let $\ex(H(A))\subset A$ denote the set of extremal points of the polytope $H(A)$, and translate $A$  so that $0 \in \ex(H(A))$ and $\Lambda_{A} = \mathbb{Z}^d$, without loss of generality. Then $NH(A) \subset C_A$ where 
\[C_A: = \Big\{ \sum_{a \in A} c_a a: \, c_a \in \mathbb{R}_{ \geqslant 0} \text{ for all } a\Big\}\] 
is the cone generated by $A$, and the semigroup generated by $A$ is the nested union (as $0 \in A$)
 \[ \mathcal{P}(A): = \bigcup\limits_{N=1}^{\infty} NA  \subset C_A \cap \mathbb{Z}^d.\] 
 The set of \emph{exceptional elements} are those lattice points in $C_A$ which do not belong to $\mathcal{P}(A)$,
\[\mathcal{E}(A):= (C_A \cap \mathbb{Z}^d) \setminus \mathcal{P}(A),\]
and so
 \[NA \subset (NH(A) \cap \mathbb{Z}^d) \setminus \mathcal{E}(A).\]
  Similarly, for all $a \in \ex(H(A))$ we have $0 \in a - \ex(H(A)) = \ex(H(a - A))$. Since $\Lambda_{a - A} = \mathbb{Z}^d$ too, we have \[ N(a-A) \subset (N H(a-A) \cap \mathbb{Z}^d) \setminus \mathcal{E}(a-A).\] Rearranging and taking the intersection over all $a \in \ex(H(A))$, we get 
\begin{equation}
\label{eq:structure_inclusion}
NA \subset (NH(A) \cap \mathbb{Z}^d) \setminus \Big( \bigcup\limits_{a \in \ex(H(A))} (aN - \mathcal{E}(a-A))\Big).
\end{equation}

It was shown in \cite{GS20}  that there is equality in \eqref{eq:structure_inclusion} once $N \geqslant N_{\Str}(A)$; that is 
\begin{equation}
\label{eq:structure_equation}
NA = (NH(A) \cap \mathbb{Z}^d) \setminus \Big( \bigcup\limits_{a \in \ex(H(A))} (aN - \mathcal{E}(a-A))\Big),
\end{equation}
filling out to  its maximal possible size. This was proved by Nathanson \cite{N72} when $d=1$  and for $d\geqslant 2$  in \cite{GS20}; however the proof in \cite{GS20} did not produce a value for $N_{\Str}(A)$ as it relied on the ineffective Lemma \ref{Lemma:Mann_Dickson}.
The article \cite[Theorem 1.3]{GSW23} then gave the first effective bound on $N_{\Str}(A)$ for all $A$:
 \begin{equation}
\label{eq:weak_structure_bound}
N_{\Str} \leqslant (d \vert A\vert \cdot \operatorname{width}(A))^{13 d^6}.
\end{equation}
Previous bounds were known when $d=1$ \cite{N72, WCC11, GS20, GW21, L22} and when $H(A)$ is a $d$-simplex (\cite{CG21}, with refined bounds in \cite{GSW23}). 

The proof of \eqref{eq:weak_structure_bound} in \cite{GSW23} was intricate involving an ``induction on dimension'' strategy. This required  repeated use of Siegel's Lemma from quantitative linear algebra (in the version proved by Bombieri--Vaaler \cite{BV83}) together with delicate geometric considerations, such as the size and shape of the intersection between neighbourhoods of two cones $C_A$ and $C_B$. 

Our second main result  gives a strengthening of \eqref{eq:weak_structure_bound}, with a much simpler proof. This is based in part on ideas from the proof of Theorem \ref{Theorem:improved_Khovanskii}, developed out of the ideas in \cite[Chapter 4]{S96}. Before stating the result, we introduce one final quantity associated to $A$. 

\begin{Definition}
\label{def: kappa}
Let $A \subset \mathbb{R}^d$ be finite with $\spn(A-A) = \mathbb{R}^d$. Given a facet (i.e.~$(d-1)$-dimensional face) $F$ of $H(A)$, and a point $a \in A \setminus F$, let $\vol(F, a)$ denote the volume of the polytope given by the convex hull of $F$ and $a$.  Then set
\[
\kappa(A) = \max_F \frac{\max_a \vol(F, a)}{\min_a \vol(F, a)}.
\]
\end{Definition}

\begin{Remark}
\label{rmk: kappa}
\emph{There are several equivalent ways to define $\kappa(A)$. Indeed, for each $F$ we could equivalently replace $\vol(F, a)$ by $g_F(a)$ for any affine-linear function $g_F: \mathbb{R}^d \to \mathbb{R}$ which vanishes on $F$ and is strictly positive on $H(A) \setminus F$.  For example, we could take $g_F(a)$ to be the (signed) orthogonal distance from $F$ to $a$.  Or we could pick linearly independent points $b^{(1)}, \dots, b^{(d)}$ in $F \cap A$ and let $g_F(a) = \det(b^{(1)} - a, \dots, b^{(d)} - a)$, where the $b^{(j)}$ are ordered to make this determinant positive for $a \in H(A) \setminus F$.  Using the latter choice of $g_F$ we see that}
\begin{equation}
\label{eq: kappa vol bound}
\kappa(A) \leqslant \frac{\vol^{\dag,\max}(H(A))}{\vol^{\dag,\min}(H(A))}.
\end{equation}
\end{Remark}

Our main result  is  the first  general effective bound for $N_{\Str}(A)$ that captures the geometry of $A$ by involving the quantities $\kappa(A)$, $\vol(H(A))$, and $\vol^{\dag,\max}(H(A))$:

\begin{Theorem}[Improved structural threshold]
\label{Theorem:improved_structural_threshold}
Let $A \subset \mathbb{Z}^d$ be a finite set, with $0 \in \ex(H(A))$ and $\Lambda_A = \mathbb{Z}^d$. Then we have the following two upper bounds:
\begin{equation}
\label{eq:first_structure_bound}
N_{\Str}(A) \leqslant (d+1)\kappa(A) \Big(d!\vol(H(A)) + \big(\lvert\ex(H(A))\rvert - d - 1\big) \vol^{\dag,\max}(H(A))\Big)
\end{equation}
and
\begin{equation}
\label{eq:second_structure_bound}
N_{\Str}(A) \leqslant  (d+1)\kappa(A) \big(\vert A\vert - d - 1\big) \vol^{\dag,\max}(H(A)).
\end{equation}
\end{Theorem}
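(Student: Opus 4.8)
The plan is to reduce the equality in \eqref{eq:structure_equation} to an assertion that every lattice point $v \in (NH(A) \cap \mathbb{Z}^d)$ which avoids all of the ``shifted exceptional sets'' $aN - \mathcal{E}(a-A)$ actually lies in $NA$. The one-dimensional intuition is that such a $v$ is ``far from every facet of $NH(A)$ in the appropriate semigroup sense'', and a point that is simultaneously far from all facets should be writable as a sum of $N$ elements of $A$. Concretely, I would first fix a point $v$ as above and, for each facet $F$ of $H(A)$, use the geometry of the cone to peel off a bounded number of generators of $A$ lying near $F$, reducing $v$ to a point $v'$ that lies well inside the interior of $(N'H(A)) \cap \mathbb{Z}^d$ for some $N' \leqslant N$ not much smaller than $N$; the cost of this peeling, facet by facet, is controlled by $\vol(F,a)$-type quantities, which is exactly where $\kappa(A)$ enters (it measures the worst-case ratio between how much ``height'' one generator buys you near a facet and how much you might need). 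The total number of generators consumed is at most roughly $(d+1)\kappa(A)$ times a volume term, giving the $N'$ we need.

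Second, once $v'$ is interior — meaning it is far from every facet of its containing dilate — I would invoke (or reprove, in the linear-algebraic language developed for Theorem~\ref{Theorem:improved_Khovanskii}) a Khovanskii-type statement: an interior point of a sufficiently large dilate of $H(A)$ automatically belongs to the semigroup $\mathcal{P}(A)$ at the correct level, because the obstructions $\mathcal{E}(a-A)$ are ``supported near the boundary'' and a triangulation of $H(A)$ into simplices (each of volume at least $\tfrac{1}{d!}\vol^{\dag,\min}$, at most $\tfrac{1}{d!}\vol^{\dag,\max}$) lets one write $v'$ explicitly as a nonnegative integer combination of the vertices of one simplex in the triangulation, padded out with copies of $0 \in A$ to reach exactly the level $N'$. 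Summing the number of simplices (bounded by $d!\vol(H(A))/\vol^{\dag,\min}$, or more crudely by $|A| - d - 1$ many ``extra'' vertices beyond a base simplex) against the per-simplex cost produces the two stated bounds \eqref{eq:first_structure_bound} and \eqref{eq:second_structure_bound}; the first keeps the volume and the excess-vertex count separate, the second bounds everything by the cruder $|A| - d - 1$ using \eqref{eq: kappa vol bound} and the remark following the volume definition.

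Third, I would run the argument symmetrically at every extremal vertex $a \in \ex(H(A))$ — this is why \eqref{eq:structure_inclusion} intersects over all such $a$ — since the ``interiority'' of $v$ has to be measured from each vertex cone, and the worst vertex dictates the threshold. Combining the per-vertex reductions, the total loss in $N$ is additive over the $d+1$ (or more) local pictures one genuinely needs, and the $(d+1)$ factor in front of $\kappa(A)$ records this: near any single facet one must simultaneously respect the $d$ vertices of that facet plus the apex. The bookkeeping of ``how many generators, from which region, reduce a boundary point to an interior one'' is the technical heart, and it must be done uniformly in $v$.

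The main obstacle I anticipate is the peeling step: showing that a point $v$ avoiding $aN - \mathcal{E}(a-A)$ for every extremal $a$ can be moved into the deep interior by subtracting only $O((d+1)\kappa(A)\,\vol^{\dag,\max})$ generators, \emph{with the subtracted generators genuinely lying in $A$ and the running total of consumed generators staying below $N$}. The danger is a point that is only ``barely'' non-exceptional with respect to several facets at once, where naive facet-by-facet corrections could interfere or overshoot the budget; handling this cleanly likely requires choosing, for each facet, a distinguished generator $a_F \in A \setminus F$ of minimal $\vol(F, a_F)$ and arguing that repeatedly subtracting $a_F$ decreases the relevant ``defect coordinate'' by a controlled amount while not creating a new defect at another facet beyond what $\kappa(A)$ already accounts for. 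Making that simultaneous control precise — essentially an $\ell^\infty$-type estimate on a system of $d+1$ affine functionals on $H(A)$, normalized by their minimal nonzero generator-values — is where the constant $(d+1)\kappa(A)$ is forced and where the proof will need to be careful rather than routine.
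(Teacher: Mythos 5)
Your proposal has a genuine gap at its central step. You claim that once $v'$ is moved into the deep interior of a large dilate, a triangulation of $H(A)$ "lets one write $v'$ explicitly as a nonnegative integer combination of the vertices of one simplex in the triangulation, padded out with copies of $0$." This fails: the vertices of a lattice simplex $H(B^{(j)}\cup\{0\})$ generate a sublattice of index $d!\vol(H(B^{(j)}\cup\{0\}))$ in $\mathbb{Z}^d$, so most lattice points of $NH(B^{(j)}\cup\{0\})$ — interior or not — are simply not nonnegative integer combinations of $B^{(j)}\cup\{0\}$. This is exactly the phenomenon the exceptional sets $\mathcal{E}(\cdot)$ encode, and your argument never uses the hypothesis that $v$ avoids $aN-\mathcal{E}(a-A)$ in a load-bearing way. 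That hypothesis is what guarantees $v'\in\mathcal{P}(A')$, i.e.\ that \emph{some} representation $v'=\sum\eta_a a$ with $\eta\in\mathbb{Z}_{\geqslant 0}^A$ exists at all (possibly of enormous weight); the actual content of the theorem is reducing the weight of that representation to at most $N$, and your sketch does not engage with that problem. Relatedly, your suggestion that \eqref{eq:second_structure_bound} follows from \eqref{eq:first_structure_bound} via \eqref{eq: kappa vol bound} is not right — neither bound implies the other, and the paper proves them by parallel arguments.

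The "peeling" step, which you correctly identify as the technical heart, is also not carried out, and the paper's mechanism is quite different from facet-by-facet correction. One applies Carath\'eodory to $v\in NH(A)$ to find a vertex $s_0$ whose barycentric coordinate is at least $N_0$ (this pigeonhole over $d+1$ coordinates is where the factor $d+1$ comes from, not from summing losses over $d+1$ local pictures), reflects to $v'=s_0N-v\in (N-N_0)H(A')\cap\mathcal{P}(A')$ with $A'=s_0-A$, and then takes a representation of $v'$ of \emph{minimal weight}. The circuit machinery from Section 2 (the lattice $\mathcal{Z}$, the set $\mathcal{Z}^\dagger$ with $\Vert u\Vert_\infty\leqslant\vol^{\dag,\max}$, and the decomposition lemma) shows that the set of generators with coefficient $\geqslant\vol^{\dag,\max}(H(A))$ must lie in a single outer facet $F$, else one could strictly reduce the weight. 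Applying the linear functional $\beta$ defining $F$ to $v'=u+w$ and using $\beta(a)\geqslant 1-\kappa(A)$ for all $a\in A'$ then bounds the total weight by $N$. If you want to salvage your outline, you would need to replace the interiority/triangulation claim with an argument that starts from an existing (long) representation of $v'$ and shortens it; the simultaneous control over several facets that worries you is precisely what the single-facet concentration lemma avoids.
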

\noindent The   bound \eqref{eq:first_structure_bound} is better when $\vert A\vert$ is substantially larger than $\ex(H(A))$; and \eqref{eq:second_structure_bound} when $d! \vol(H(A))$ is substantially larger than $\vol^{\dag,\max}(H(A))$. Using \eqref{eq: kappa vol bound} and bounding $\vol^{\dag,\min}(H(A)) \geqslant 1$ and $\vol^{\dag,\max}(H(A)) \leqslant d! \vol(H(A))$, \eqref{eq:first_structure_bound} implies the cleaner but slightly weaker bound
\begin{equation}
\label{eq:clean_structure_bound}
N_{\Str}(A) \leqslant (d+1)(d!)^2 (\vert \ex(H(A))\vert - d) \vol(H(A))^2,
\end{equation}
but still much stronger than \eqref{eq:weak_structure_bound}. 

Similar to the Khovanskii threshold, we guess that a bound like $N_{\Str}(A) \leqslant d! \vol(H(A))$ holds in general. Our \eqref{eq:clean_structure_bound} is roughly the square of this bound, so still far from  optimal.  

If $H(A)$ is a simplex then $\kappa(A) = 1$ and $\lvert \ex(H(A))\rvert = d+1$, so \eqref{eq:first_structure_bound} implies that $N_{\Str}(A) \leqslant (d+1)! \vol(H(A))$, which essentially recovers the best known bound in this case \cite[Theorem 1.5]{GSW23}.  

The paper is structured as follows. In Section \ref{Section:Khovanskii} we prove Theorem \ref{Theorem:improved_Khovanskii}, while developing   general lemmas on equations $\sum_{a \in A} a = \sum_{b \in B} b$ (with $B \subset A$) that will be useful throughout. In Section \ref{Section:structure} we use these lemmas,  with some convex geometry, to deduce Theorem \ref{Theorem:improved_structural_threshold}.

\subsection*{Acknowledgements}
This material is partly based upon work supported by the Swedish Research Council under grant no. 2021-06594 while the third author was in residence at Institut Mittag-Leffler in Djursholm, Sweden, during the winter semester of 2024. Our proof of Theorem \ref{Theorem:improved_Khovanskii} is pretty much that presented in \cite[Chapter 4]{S96}, albeit written in a different mathematical language and context; it also helped inspire the proof of \ref{Theorem:improved_structural_threshold}. \\

\section{Proof of Theorem \ref{Theorem:improved_Khovanskii}}
\label{Section:Khovanskii}

Let $A = \{a_1,\dots,a_{\ell}\}$. To keep track of various quantities throughout the proof, we define the \emph{weight} of a vector $m\in \mathbb Z^\ell$ by $\wt(m):=m\cdot 1=\sum_{i=1}^\ell m_i$. We also let $A_{\mat}:=(a_1,\dots,a_\ell)$ be the $d$-by-$\ell$ matrix formed with the $a_i$ as column vectors, so that  $A_{\mat} m=\sum_i m_ia_i$. 

We begin with a result and proof due to Nathanson and Ruzsa. 

\begin{Proposition} \label{Nath-Ruz} There exists a finite set of lattice points $\mathcal M\subset  \mathbb Z_{\geqslant 0}^\ell$, as described in the proof, such that for all positive integers $h$ we have  
\[
|h A|= \sum_{ T\subset \mathcal M } (-1)^{|T|}  \binom{h-\wt(m_T)+\ell-1}{\ell-1},
\]
where $m_T$ is the vector with $ (m_T)_i:=\max_{m\in T} (m)_i$, and $\binom N{\ell-1}=0$ if $N<\ell-1$.
\end{Proposition}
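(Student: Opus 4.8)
The plan is to realize $|hA|$ via inclusion–exclusion over a finite "defining set" of relations, following Nathanson–Ruzsa. First I would set up the natural surjection: for each $h$, the map $\Sigma\colon \{m \in \mathbb{Z}_{\geqslant 0}^\ell : \wt(m) = h\} \to hA$ sending $m \mapsto A_{\mat}m = \sum_i m_i a_i$ is onto by definition of the sumset, so $|hA|$ equals the number of distinct values $A_{\mat}m$ takes as $m$ ranges over the weight-$h$ simplex of lattice points. To count the image I would choose, in each fiber $\Sigma^{-1}(x)$, a canonical representative — say the lexicographically smallest $m$ with $A_{\mat}m = x$ and $\wt(m) = h$ — and count canonical representatives instead.

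Next I would identify the obstruction to being canonical. A point $m$ fails to be the lex-minimal representative of its fiber precisely when there is another $m' \leqslant_{\lex} m$ (strictly) with $A_{\mat}m' = A_{\mat}m$ and $\wt(m') = \wt(m)$; writing $m = m' + (m - m')$, such a coincidence is witnessed by a nonzero $v \in \ker A_{\mat}$ with $\wt(v) = 0$, $v_+ \leqslant m$ (so that $m - v \geqslant 0$), and $v$ "lex-positive" in the appropriate sense. Equivalently, $m$ is non-canonical iff $m \geqslant m_0$ coordinatewise for some $m_0$ in the set $S$ of all $v_+$ arising this way (where $v_+ = \max(v,0)$ coordinatewise, i.e. the positive part of a kernel relation oriented so the negative part is lex-smaller). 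By the Mann–Dickson Lemma (Lemma \ref{Lemma:Mann_Dickson}), this up-set $S \subset \mathbb{Z}_{\geqslant 0}^\ell$ has a finite set of minimal generators; I would take $\mathcal{M}$ to be that finite generating set. Then a weight-$h$ point $m$ is canonical iff it does \emph{not} dominate any element of $\mathcal{M}$.

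With $\mathcal{M}$ in hand the count is pure inclusion–exclusion. The total number of $m \in \mathbb{Z}_{\geqslant 0}^\ell$ with $\wt(m) = h$ is $\binom{h + \ell - 1}{\ell - 1}$ (stars and bars), which is the $T = \emptyset$ term. For a nonempty $T \subset \mathcal{M}$, the set of $m$ with $\wt(m) = h$ and $m \geqslant (m)_i$ for every coordinate $i$ and every element of $T$ — i.e. $m \geqslant m_T$ coordinatewise, where $m_T$ is the coordinatewise maximum of $T$ — is in bijection (via $m \mapsto m - m_T$) with the weight-$(h - \wt(m_T))$ points of $\mathbb{Z}_{\geqslant 0}^\ell$, of which there are $\binom{h - \wt(m_T) + \ell - 1}{\ell - 1}$, interpreted as $0$ when $h < \wt(m_T)$. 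Since "$m$ dominates some element of $\mathcal{M}$" is the union over $m_0 \in \mathcal{M}$ of the events "$m \geqslant m_0$", and the intersection of the events indexed by $T$ is exactly "$m \geqslant m_T$", inclusion–exclusion gives
\[
|hA| \;=\; \sum_{T \subset \mathcal{M}} (-1)^{|T|} \binom{h - \wt(m_T) + \ell - 1}{\ell - 1},
\]
which is the claimed formula.

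The main obstacle — or rather the point needing the most care — is the bookkeeping around the canonical representative: one must fix an ordering on $\mathbb{Z}^\ell$ (lexicographic will do, though any total order refining domination and compatible enough with translation works), verify that every fiber of $\Sigma$ has a unique minimum, and check that the up-set $S$ of "non-canonical patterns" is genuinely characterized by coordinatewise domination of a single minimal generator set, so that the events in the inclusion–exclusion match up cleanly. I would also be slightly careful that the finiteness input is only the Mann–Dickson Lemma (this is the step that makes $\mathcal{M}$ finite but not yet effective — effectivizing $|\mathcal{M}|$ and $\wt(m_T)$ is exactly what the later sections address). Everything else — stars and bars, the translation bijection, the boundary convention $\binom{N}{\ell-1} = 0$ for $N < \ell - 1$ — is routine.
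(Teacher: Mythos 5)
Your proposal is correct and follows essentially the same route as the paper: choose the lexicographically minimal representative in each fibre of $m \mapsto A_{\mat}m$ restricted to weight $h$, observe that the non-minimal vectors form an up-set under coordinatewise domination (which you verify via the kernel-relation characterisation, while the paper checks $\mathcal{U}+\mathbb{Z}_{\geqslant 0}^\ell = \mathcal{U}$ directly), extract a finite generating set $\mathcal{M}$ by the Mann--Dickson Lemma, and conclude by inclusion--exclusion together with stars and bars. The only cosmetic difference is that you generate the up-set from the positive parts $v^+$ of lex-positive kernel relations rather than taking the minimal elements of $\mathcal{U}$ itself; both choices yield a valid $\mathcal{M}$ for the stated formula.
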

\begin{proof}  
If $x\in h A$ then let 
\[
\rep_h(x):=\{ m\in \mathbb Z_{\geqslant 0}^\ell: \wt(m)=h \text{ and } A_{\mat} m =x\}, 
\] denote the coefficient set of non-negative combinations of $h$ elements of $A$ that represent $x$. Let $m_h(x)$ be the minimum element in $\rep_h(x)$ with respect to the lexicographic ordering, and let 
\[
\mathcal U:= \bigcup_{h\geqslant 0} \bigcup_{x\in h A} \{ m\in \rep_h(x): m\ne m_h(x)\}.
\]
Evidently $\vert h A\vert = \vert \{ m_h(x): x\in hA\}\vert$. We will calculate this size using the relationship 
\[
\{ m_h(x): x\in h A\}= \{n \in \mathbb Z_{\geqslant 0}^\ell:\ \wt(n)=h\text{ and }   n\not\in \mathcal U \} .
\]

To this end, note that $\mathcal U+\mathbb Z_{\geqslant 0}^\ell=\mathcal U$. Indeed, if 
$y\in \mathbb Z_{\geqslant 0}^\ell$ then writing $v=A_{\mat} y $ and $k=\wt(y)$ we have
$\rep_h(x) +y \subset \rep_{h+k}(x+v)$. So if $m\in \rep_h(x)\cap \mathcal{U}$ then
\[
m+y >_{\lex}  m_h(x)+y \geq_{\lex} m_{h+k}(x+v).
\] So $m+y \in \mathcal{U}$ as needed. 

We now apply Lemma \ref{Lemma:Mann_Dickson} (the Mann--Dickson Lemma) to $\mathcal U$. Writing $x\leq_{\coord} y$ if $x_i\leqslant y_i$ for all $i$, this implies the set
\[\mathcal{M} = 
\mathcal M(\mathcal U):=\{ m\in \mathcal{U}:   \text{for all }u \in \mathcal{U}, \, u\leq_{\coord} m\implies u=m\} 
\]
 of minimal elements is finite, and for every $u \in \mathcal{U}$ there exists some $m \in \mathcal{M}$ with $m \leq_{\coord} u$. Therefore we can use inclusion-exclusion to obtain
\begin{align*}
& \{n \in \mathbb Z_{\geqslant 0}^\ell:\ \wt(n)=h\text{ and }   n\not\in \mathcal U \}  \\
&\qquad
=  \sum_{ T\subset \mathcal M} (-1)^{|T|} \{ n \in \mathbb Z_{\geqslant 0}^\ell:\ \wt(n)=h\text{ and } m\leq_{\coord} n\ \forall m\in T\}\\
&\qquad =  \sum_{ T\subset \mathcal M} (-1)^{|T|} \{ n \in \mathbb Z_{\geqslant 0}^\ell:\ \wt(n)=h\text{ and } m_T \leq_{\coord} n \}.
\end{align*}
 We have written this in terms of sets, and one should think of $+S$ as including the elements of $S$ with multiplicity, and $-S$ as removing one copy of each element of $S$. 

Note that if $n \in \mathbb{Z}_{ \geqslant 0}^\ell$ with $\wt(n) = h$ and $m_T \leq_{\coord} n$, then $\wt(m_T)\leqslant h$. In that case, writing $n=m_T+r$ we obtain
 \[
  \{ n \in \mathbb Z_{\geqslant 0}^\ell:\ \wt(n)=h\text{ and } m_T\leq_{\coord} n\}
  = \{ r \in \mathbb Z_{\geqslant 0}^\ell:\ \wt(r)=h-\wt(m_T)\} 
   \]
   and the result follows from the usual `stars and bars' bound. 
\end{proof}
The remainder of the proof of Theorem \ref{Theorem:improved_Khovanskii} concerns bounding $\wt(m_T)$ above. To describe this argument we introduce some more notation, which will be of use throughout the paper. Given  $u  \in \mathbb{Z} ^{\ell}$ define vectors $u^+,u^- \in \mathbb{Z}_{\geqslant 0}^{\ell}$ where 
$(u^+)_i=\max\{ 0, u_i\}$ and $(u^-)_i=\max\{ 0, -u_i\}$, so that $u = u^+ - u^{-}$. If $u \in \mathbb{Z}^{\ell}$, we let $\supp(u) = \{i: \, u_i \neq 0\}$. 
Next let  
\begin{equation}
\label{eq:def_of_Z}
\mathcal Z = \mathcal{Z}(A):= \{ z\in \mathbb Z^\ell:  \wt(z)=0 \text{ and } A _{\mat}z=0\} 
\end{equation}
which is a lattice. By taking $m=z^+, m'=z^-$ with $x=A_{\mat} m$ and $h=\wt(m)$ we obtain
\[
\mathcal Z=\bigcup_{h\geqslant 0} \bigcup_{x\in h A} \{ m-m': m,m'\in \rep_h(x)\}.
\]

We continue with a lemma relating $\mathcal{M}$ and $\mathcal{Z}$. 

\begin{Lemma}
\label{Lemma minimal useless elements}
Given $m \in \mathcal{M}$ let $x=A_{\mat} m$ and $h=\wt(m)$.  Then
 $\supp(m) \cap \supp(m_h(x)) = \emptyset$.  Moreover if there exists 
$v \in \mathcal{Z} \setminus \{0\}$  with $v^+\leq_{\coord} m$ and $v^-\leq_{\coord} m_h(x)$
then $v^+ = m$, $v^- = m_h(x)$, and $v=m-m_h(x)$.
\end{Lemma}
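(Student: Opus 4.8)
The plan is to exploit the minimality defining $\mathcal{M}$ — namely that no element of $\mathcal{U}$ is strictly below $m$ in the $\leq_{\coord}$ order — together with the translation-invariance $\mathcal{U} + \mathbb{Z}_{\geqslant 0}^\ell = \mathcal{U}$ established in the proof of Proposition \ref{Nath-Ruz}. First I would prove the disjointness of supports. Suppose for contradiction that some coordinate $i$ lies in $\supp(m) \cap \supp(m_h(x))$, i.e.\ both $m_i \geqslant 1$ and $(m_h(x))_i \geqslant 1$. Let $e_i$ be the $i$-th standard basis vector, and consider $m' := m - e_i$. Then $m' \in \mathbb{Z}_{\geqslant 0}^\ell$, $\wt(m') = h-1$, and $A_{\mat} m' = x - a_i$. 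The key point is that $m' \in \mathcal{U}$: indeed $m_h(x) - e_i \in \rep_{h-1}(x - a_i)$ as well (using $(m_h(x))_i \geqslant 1$), so $m_{h-1}(x-a_i) \leq_{\lex} m_h(x) - e_i <_{\lex} m - e_i = m'$ — here the strict inequality comes from $m \neq m_h(x)$ (as $m \in \mathcal{U}$) together with the fact that subtracting the same $e_i$ from both preserves the lex comparison $m_h(x) <_{\lex} m$. Hence $m'$ represents $x - a_i$ but is not the lex-minimal representative, so $m' \in \mathcal{U}$. But $m' \leq_{\coord} m$ and $m' \neq m$, contradicting minimality of $m$ in $\mathcal{M}$. (I should double-check the corner case $h - 1 < 0$, but $m_i \geqslant 1$ forces $h = \wt(m) \geqslant 1$, so this is fine; and I should make sure the lex-comparison manipulation is valid — subtracting $e_i$ from two vectors that agree in all coordinates before position $i$ and differ first at some position; one must check position $i$ doesn't disturb things, which it doesn't since it is decremented in both.)

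For the second assertion, suppose $v \in \mathcal{Z} \setminus \{0\}$ with $v^+ \leq_{\coord} m$ and $v^- \leq_{\coord} m_h(x)$. Write $r := v^+$, $s := v^-$, so $r, s \in \mathbb{Z}_{\geqslant 0}^\ell$ with disjoint supports, $\wt(r) = \wt(s)$ (since $\wt(v) = 0$), and $A_{\mat} r = A_{\mat} s$ (since $A_{\mat} v = 0$). The idea is that $m - r \geq_{\coord} 0$ and $m_h(x) - s \geq_{\coord} 0$, and I claim $m - r$ still lies in $\mathcal{U}$ unless $r = m$. Indeed $m - r$ has weight $h - \wt(r)$ and $A_{\mat}(m-r) = x - A_{\mat} r = x - A_{\mat} s =: x'$; similarly $m_h(x) - s$ has the same weight $h - \wt(s) = h - \wt(r)$ and represents the same point $x'$. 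So both $m - r$ and $m_h(x) - s$ lie in $\rep_{h - \wt(r)}(x')$. Now if $r \neq m$ then $m - r \neq 0$; comparing lexicographically, since $m >_{\lex} m_h(x)$ and $r$ is supported where $m$ is positive while $s$ is supported where $m_h(x)$ is positive, I want to conclude $m - r >_{\lex} m_h(x) - s$, which would give $m - r \in \mathcal{U}$, contradicting minimality of $m$. The cleanest way: if $m - r >_{\lex} m_{h-\wt(r)}(x')$ then $m - r \in \mathcal{U}$ and $m - r <_{\coord} m$ (strictly, as $r \neq 0$ follows from $v \neq 0$ combined with... actually I need $v^+ \neq 0$), contradiction; so $m - r = m_{h-\wt(r)}(x')$, i.e.\ $m - r$ is itself lex-minimal. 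But then appending the argument, $m = (m - r) + r$ should be "reconstructible," and I would push on to show $r = m$ outright.

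The cleanest route for the second part is probably this: since $m - r \in \mathbb{Z}_{\geqslant 0}^\ell$ and $m - r \leq_{\coord} m$ with $m$ minimal in $\mathcal{M}$, either $m - r \notin \mathcal{U}$, or $m - r = m$. If $m - r \notin \mathcal{U}$, then $m - r = m_{h'}(x')$ where $h' = h - \wt(r)$, $x' = A_{\mat}(m-r)$; then adding back $s$ (not $r$!), we get $m_h(x) = m_{h'}(x') + s$? No — rather $(m - r) + s$ has weight $h$ and $A_{\mat}$-image $x' + A_{\mat} s = x' + A_{\mat} r = x$, so $(m-r)+s \in \rep_h(x)$, hence $m_h(x) \leq_{\lex} (m-r)+s$. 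On the other hand $s \leq_{\coord} m_h(x)$ gives $m_h(x) - s \in \rep_{h'}(x')$, so $m_{h'}(x') \leq_{\lex} m_h(x) - s$, i.e.\ $m - r \leq_{\lex} m_h(x) - s$, i.e.\ $(m - r) + s \leq_{\lex} m_h(x)$. Combining, $(m-r)+s = m_h(x)$, so $m = m_h(x) + r - s = m_h(x) + v$. But $m_h(x)$ and $m$ have disjoint supports by the first part, $v = r - s$ with $r, s$ having disjoint supports, $r \leq_{\coord} m$, $s \leq_{\coord} m_h(x)$ — this forces $v^+ = r = m$ and $v^- = s = m_h(x)$ and $v = m - m_h(x)$, which is exactly the conclusion. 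The remaining case $m - r = m$ means $r = v^+ = 0$, so $v = -s = -v^- \leq_{\coord} 0$; but then $-v \in \mathcal{Z}$ with $(-v)^+ = s \leq_{\coord} m_h(x)$ and $(-v)^- = 0 \leq_{\coord} m$ — symmetry isn't quite available here, so instead I argue directly: $v \neq 0$ forces $s \neq 0$, but $A_{\mat} s = A_{\mat} r = 0$ and $\wt(s) = \wt(r) = 0$ with $r = 0$, so $s \in \mathcal{Z}$, $s \geq_{\coord} 0$, $\wt(s) = 0$ forces $s = 0$, contradiction. So this case cannot occur.

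The \textbf{main obstacle} I anticipate is handling the lexicographic comparisons rigorously — specifically, being careful that subtracting a coordinate vector (or a nonnegative vector supported on the "positive part") from two vectors related by $<_{\lex}$ preserves the relation, and correctly tracking the boundary cases where weights could drop below zero or where $v^+$ or $v^-$ vanishes. Everything else is bookkeeping with $\wt$, $A_{\mat}$, and the defining properties of $\mathcal{M}$ and $\mathcal{U}$; the genuinely delicate point is the interplay between the coordinatewise order (used for $\mathcal{M}$-minimality) and the lexicographic order (used to define $\mathcal{U}$).
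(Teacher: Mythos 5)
Your proposal is correct, and in its final ``cleanest route'' paragraph it contains a complete argument. The first assertion (disjointness of supports) is proved exactly as in the paper: from $m_i, (m_h(x))_i \geqslant 1$ you pass to $m - e_i$, observe it lies above $m_h(x) - e_i$ in the lexicographic order and hence belongs to $\mathcal{U}$, and contradict the $\leq_{\coord}$-minimality of $m$. For the second assertion you take a mildly different route from the paper. The paper first shows $v^+ >_{\lex} v^-$ (by contradicting the lex-minimality of $m_h(x)$ via the competitor $v^+ + m_h(x) - v^-$), concludes that $v^+ \in \mathcal{U}$, and then applies the $\mathcal{M}$-minimality of $m$ to $v^+$ itself to get $v^+ = m$ directly, after which $v^- = m_h(x)$ follows. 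You instead apply the $\mathcal{M}$-minimality to $m - v^+$: since $v^+ \neq 0$ this element is strictly below $m$, hence not in $\mathcal{U}$, hence lex-minimal in $\rep_{h - \wt(v^+)}(x - A_{\mat}v^+)$; comparing it with $m_h(x) - v^-$ in the same class and sandwiching $(m - v^+) + v^-$ against $m_h(x)$ gives $v = m - m_h(x)$, and the disjointness of supports from the first part then identifies $v^+ = m$ and $v^- = m_h(x)$. Both arguments rest on exactly the same two ingredients (lex-minimality of $m_h(x)$ within its representation class, and $\leq_{\coord}$-minimality of $m$ within $\mathcal{U}$) and are of comparable length; yours trades the paper's intermediate claim $v^+ >_{\lex} v^-$ for a two-sided lex inequality, and correctly handles the degenerate case $v^+ = 0$ (which forces $v = 0$). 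The bookkeeping you flag as the main obstacle — that adding or subtracting a fixed nonnegative vector from both sides preserves lexicographic comparisons, and that all weights stay nonnegative because $v^+ \leq_{\coord} m$ and $v^- \leq_{\coord} m_h(x)$ — goes through without difficulty.
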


\begin{proof} Write $n=m_h(x)$ so that $A_{\mat}(m-n)=x-x=0$.
If $\supp(m) \cap \supp(n)\neq \emptyset$, say that $m_i,n_i\geqslant 1$. Then
$m-e_i>_{\lex} n-e_i$ with $m-e_i, n-e_i\in \mathbb Z_{\geqslant 0}^\ell$ so $m-e_i\in \mathcal U$, and $m-e_i<_{\coord} m$, contradicting that $m\in \mathcal M(\mathcal U)$. So $\supp(m) \cap \supp(n)=\emptyset$ as claimed. 

Now if $v\in \mathcal{Z} \setminus \{0\}$  with $v^+\leq_{\coord} m$ and $v^-\leq_{\coord} n$ then $v^+>_{\lex} v^-$. Indeed, if not then  
    $ w:= v^++ n - v^-  <_{\lex}  n$ and $w \in \mathbb{Z}_{\geqslant 0}^{\ell}$. Moreover 
    $m - w= (m-n) - (v^+- v^-)  \in  \mathcal Z$, 
    so that $w\in \rep_h(x)$ with $w<_{\lex} n$. However $n= m_h(x)\leq_{\lex} w$ by definition, which gives a contradiction. So $v^+ >_{\lex} v^-$. 

Finally, let $y=A_{\mat} v^+ $ and $k=\wt(v^+)$, so that $v^+,v^- \in \rep_k(y)$ and  $v^+\in \mathcal{U}$ (as $v^+ >_{\lex} v^-$). Then $v^+\leq_{\coord}  m$ and  $m \in \mathcal{M}$, so $v^+=m$. Therefore $y=x, k=h$ and so $v^-\in \rep_h(x)$ which implies that 
$n\leq_{\lex} v^-$. Moreover $v^-\leq_{\coord} n$ which implies $v^-\leq_{\lex} n$, and so $v^-=n$. So $v = v^+ - v^- = m-n$ as claimed. 
\end{proof}

We define $\mathcal{Z}^\dag = \mathcal{Z}^\dag(A)$ by
\[\mathcal{Z}^\dag:= \{ u\in \mathcal Z \setminus \{0\}:   \text{ If } v \in \mathcal Z \setminus \{0\} \text{ with } 
\supp(v)\subset  \supp(u) \text{ then } v=\lambda u \text{ for some } \lambda \in \mathbb Z\}.
\]
Note that if $v\in \mathcal Z \setminus \{0\}$ then there must exist some $u\in \mathcal Z^\dag$ with 
$\supp(u)\subset  \supp(v)$. It transpires that elements in $\mathcal{Z}^\dag$ may be strongly controlled, and this in turn will help control $\mathcal{Z}$ and finally $\mathcal{M}$. 

\begin{Lemma}
\label{Lemma circuit height bound}
If $u \in \mathcal Z^\dag$ then $\Vert u\Vert_{\infty}:=\max_i |u_i|   \leqslant \emph{Vol}^{\dag,\max}(H(A))$. 
\end{Lemma}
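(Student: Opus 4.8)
The plan is to identify $u \in \mathcal{Z}^\dag$ with a circuit in the affine-dependency sense and then compute its entries by Cramer's rule. Let $u \in \mathcal{Z}^\dag$ and put $I = \supp(u)$. The defining property of $\mathcal{Z}^\dag$ says that the only elements of $\mathcal{Z}$ supported on $I$ are integer multiples of $u$; equivalently, the space $\{ z \in \mathbb{R}^\ell : \wt(z) = 0,\ A_{\mat} z = 0,\ \supp(z) \subseteq I\}$ is one-dimensional, spanned by $u$. Writing $I = \{i_0, i_1, \dots, i_k\}$, this means the vectors $a_{i_1} - a_{i_0}, \dots, a_{i_k} - a_{i_0}$ (equivalently, the affinely-embedded points $(a_{i_j}, 1)$) satisfy a unique-up-to-scalar linear relation, so they have rank exactly $k$, i.e.\ $|I| = k+1 \leqslant d+1$ and any $k$ of the difference vectors are linearly independent. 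First I would make this dimension count precise: the map $z \mapsto (A_{\mat} z, \wt(z))$ restricted to coordinates in $I$ has a one-dimensional kernel, hence rank $|I| - 1$, forcing $|I| \leqslant d+1$.

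Next I would solve for the entries of $u$ explicitly. Fix $j \in I$ and consider the relation $\sum_{i \in I} u_i (a_i, 1) = 0$ in $\mathbb{Z}^{d+1}$. Removing the index $j$, the remaining vectors $\{(a_i,1) : i \in I \setminus\{j\}\}$ are linearly independent (there are at most $d$ of them and dropping any single column from a rank-$|I|-1$ configuration with one-dimensional kernel keeps full rank, since $u_j \neq 0$). So $u_j (a_j, 1) = -\sum_{i \in I \setminus \{j\}} u_i (a_i, 1)$, and by Cramer's rule applied to this (possibly under-determined but full-column-rank) system, each $u_i$ for $i \neq j$ equals $\pm u_j$ times a ratio of $|I\setminus\{j\}| \times |I \setminus\{j\}|$ minors of the matrix with columns $(a_i,1)$. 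Concretely, solving for the ratio $u_i / u_j$ one gets $|u_i/u_j| = |\det(\text{columns } (a_\bullet,1),\ \bullet \in I \setminus\{j,i\} \cup \{j\})| / |\det(\text{columns }(a_\bullet,1),\ \bullet \in I\setminus\{i\})|$ — I would phrase this cleanly by picking the two "extreme" indices achieving $\max_i |u_i|$ and some reference index, so that the numerator is a determinant of $(a,1)$-columns and the denominator is at least $1$ (it's a nonzero integer). A determinant of columns $(a_{i_0},1), \dots, (a_{i_k},1)$ padded/evaluated via cofactor expansion along the last row equals, up to sign, $\det(a_{i_1} - a_{i_0}, \dots, a_{i_k} - a_{i_0})$ after row-reducing; when $|I| = d+1$ this is exactly one of the determinants appearing in $\vol^{\dag,\max}(H(A))$, and when $|I| < d+1$ one extends the tuple to size $d+1$ by adjoining further indices of $A$ without decreasing the relevant minor (or argues directly that the smaller determinant is still bounded by $\vol^{\dag,\max}$ via Laplace expansion, since the extra rows/columns can be chosen as standard basis vectors). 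Hence $\|u\|_\infty = \max_i |u_i| \leqslant |\det(a_{i_1} - a_{i_0}, \dots, a_{i_d} - a_{i_0})| \leqslant \vol^{\dag,\max}(H(A))$.

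The step I expect to be the main obstacle is handling the case $|I| < d+1$ cleanly — i.e.\ showing that a circuit living in a proper affine subspace still has height bounded by the global quantity $\vol^{\dag,\max}(H(A))$, which is defined via full $d$-dimensional determinants. The honest fix is a Laplace/cofactor expansion argument: the $(|I|-1)\times(|I|-1)$ determinant of difference vectors $\det(a_{i_1} - a_{i_0}, \dots)$ is a nonzero integer, and one must produce full-dimensional indices $j_{|I|}, \dots, j_d \in \{1,\dots,\ell\}$ (using that $\Lambda_{A-A}$ is $d$-dimensional, so $A - A$ spans $\mathbb{R}^d$) such that $\det(a_{i_1} - a_{i_0}, \dots, a_{i_{|I|-1}} - a_{i_0}, a_{j_{|I|}} - a_{i_0}, \dots, a_{j_d} - a_{i_0})$ is a nonzero integer multiple of the smaller determinant — then its absolute value is $\geqslant |\det(\text{small})|$ and is itself at most $\vol^{\dag,\max}(H(A))$. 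Care is needed that the "multiple" has absolute value $\geqslant 1$; this follows because the extension determinant, expanded along the new columns, has the small determinant as a minor with integer cofactor, and picking the $j$'s to make that cofactor nonzero is possible precisely by the spanning hypothesis. Everything else is bookkeeping with signs and lexicographic/coordinate orderings, which I would suppress in the writeup.
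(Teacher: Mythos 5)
Your overall strategy (identify $u$ with a circuit, compute its entries via Cramer's rule, and use integrality/primitivity to pass from the vector of minors back to $u$) is the same as the paper's, but there are two concrete problems. First, an off-by-one error: the $|I|$ vectors $(a_i,1)$, $i \in \supp(u)$, carry a one-dimensional space of linear dependencies, hence have rank $|I|-1 \leqslant d+1$, so $|I|$ can be as large as $d+2$ (for instance, $d+2$ points of $\mathbb{Z}^d$ in general position support exactly one affine relation). Moreover, in the case you treat as terminal, $|I| = d+1$, the determinant of \emph{all} $d+1$ columns $(a_i,1)$ vanishes --- that vanishing is precisely the relation $u$ --- so the quantities actually produced by Cramer's rule there are $d \times d$ minors of the $(d+1)\times d$ matrix obtained by deleting one column, and these are not among the determinants defining $\vol^{\dag,\max}(H(A))$. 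Only the case $|I| = d+2$ yields the full $(d+1)\times(d+1)$ determinants directly.

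Second, and more seriously, your fix for the lower-dimensional case does not go through. Adjoining standard basis vectors is not permitted, since $\vol^{\dag,\max}$ is defined only over differences of elements of $A$. And the claim that one can choose additional columns from $A$ so that the resulting full $(d+1)\times(d+1)$ determinant is a nonzero integer multiple of the small minor is unsupported: the generalized Laplace expansion along the new columns writes the big determinant as a \emph{sum} of products of complementary minors, only one term of which involves your chosen small minor, and nothing forces the big determinant to be divisible by, or even at least as large in absolute value as, that minor. The paper's resolution is to reverse the order of operations: using that $\Lambda_{A-A}$ is $d$-dimensional, first extend the column set to $d+2$ columns $b_{i_1},\dots,b_{i_{d+2}}$ of rank $d+1$, and only then apply Cramer's rule to the enlarged $(d+1)\times(d+2)$ matrix. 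The resulting integer null vector $w$ has entries which are genuine $(d+1)\times(d+1)$ minors; since $u$ extended by zeros also lies in the (one-dimensional) kernel, $w$ is proportional to $u$, so $\supp(w)\subset\supp(u)$ and the defining property of $\mathcal{Z}^\dag$ gives $w=\lambda u$ with $\lambda$ a nonzero integer, whence $|u_i|\leqslant |w_i|\leqslant \vol^{\dag,\max}(H(A))$. No comparison between minors of different sizes is ever needed.
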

\begin{proof} For each $a_i\in  A$ let $b_i=({a_i\atop 1}) \in \mathbb{Z}^{d+1}$ \  If $\supp(u) = \{i_1,\dots,i_r\}$ then the only 
 linear dependence (up to scalars) amongst the the vectors $\{  b_{i_1},\cdots, b_{i_r}\}$ is 
$\sum_j u_{i_j}b_{i_j}=0$, so that  the  $(d+1)$-by-$r$ matrix $M = (b_{i_1},\cdots, b_{i_r})$ has rank $r-1$. 
Since $\spn(\{b_1,\dots,b_{\ell}\}) = \mathbb{R}^{d+1}$ (as $\Lambda_{A-A}$ is $d$-dimensional), we can find column vectors $ b_{i_{r+1}},\dots b_{i_{d+2}}$ such that the  $(d+1)$-by-$(d+2)$ matrix $M^\prime =  (b_{i_1},\cdots, b_{i_{d+2}})$  has rank $d+1$ and so has a 1-dimensional null space.  Cramer's rule gives a non-zero null vector
\begin{equation}
\label{key vector}
w:= \sum\limits_{j=1}^{d+2} (-1)^j \det(b_{i_1}\cdots ,b_{i_{j-1}},b_{i_{j+1}},\cdots b_{i_{d+2}})  \cdot  e_{i_j}
 \end{equation}
(non-zero as the subdeterminants cannot all be zero  since $M^\prime$  has rank $d+1$). We already have the null vector $u$, so $w$ and $u$ must be scalar multiples of one another. In particular $\supp(w) \subset \supp(u)$, and hence $w = \lambda u$ for some $\lambda \in \mathbb{Z}$. This implies that
$|u_{i_j}|\leqslant |w_{i_j}|\leqslant |\det(b_{i_1}\cdots ,b_{i_{j-1}},b_{i_{j+1}},\cdots b_{i_{d+2}}) |$ for all $j$, and hence
\[
\Vert u\Vert_{\infty} \leqslant \max_{\{ k_0, k_1,\dots, k_{d}\}\subset \{1,\dots,\ell\}} \bigg\vert \det \bigg( \bigg( { a_{k_0}\atop 1}\bigg) ,  \cdots  , \bigg( { a_{k_d}\atop 1}\bigg) \bigg) \bigg\vert = \text{Vol}^{\dag, \max}(H(A)).
\]
  For the final equality, we have relabelled  $\{ b_{i_1}, \cdots , b_{i_{j-1}}, \ b_{i_{j+1}},\cdots  b_{i_{d+2}}\}$ as $\{ a_{k_0},\dots, a_{k_d}\}$ and then subtracted the first column from the others, expanding the determinant about the bottom row. 
\end{proof}
We continue by relating $\mathcal{Z}^{\dag}$ and $\mathcal{Z}$. To this end we write $\supp(u^\pm)\subset  \supp(v^\pm)$ as shorthand for the two conditions $\supp(u^+)\subset  \supp(v^+)$ and $\supp(u^-)\subset  \supp(v^-)$

\begin{Lemma}
\label{Lemma circuits exist}
If $v\in \mathcal Z\setminus \{0\}$ then there exists  $u\in \mathcal Z^\dag$ such that  
$\supp(u^\pm)\subset  \supp(v^\pm)$.
\end{Lemma}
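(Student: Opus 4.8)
The plan is to start from a given $v \in \mathcal{Z} \setminus \{0\}$ and extract from it an element of $\mathcal{Z}^\dag$ whose positive part is supported inside $\supp(v^+)$ and whose negative part is supported inside $\supp(v^-)$. The naive approach — invoking the remark right before the lemma that says there exists $u \in \mathcal{Z}^\dag$ with $\supp(u) \subset \supp(v)$ — is not enough, because that only controls the total support $\supp(u) = \supp(u^+) \cup \supp(u^-)$, and a priori some index $i$ with $u_i > 0$ could lie in $\supp(v^-)$ rather than $\supp(v^+)$. So the real content is to get the \emph{signed} containment $\supp(u^\pm) \subset \supp(v^\pm)$.

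First I would set up a minimal-counterexample / descent argument on the size of the support. Among all $w \in \mathcal{Z} \setminus \{0\}$ satisfying $\supp(w^\pm) \subset \supp(v^\pm)$ (this set is nonempty, as it contains $v$ itself), choose one, call it $u$, with $\lvert \supp(u)\rvert$ minimal. I claim this $u$ lies in $\mathcal{Z}^\dag$. Suppose not: then there is some $z \in \mathcal{Z} \setminus \{0\}$ with $\supp(z) \subset \supp(u)$ but $z$ not a scalar multiple of $u$. The goal is to form an integer combination $u' = \alpha u - \beta z$ (or $\alpha u + \beta z$, after possibly replacing $z$ by $-z$) that is nonzero, still has $\supp((u')^\pm) \subset \supp(v^\pm)$, and has strictly smaller support than $u$ — contradicting minimality.

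The key step, and the one I expect to be the main obstacle, is producing this combination with the sign conditions intact. The idea: since $z$ is not proportional to $u$, there is an index $k \in \supp(u)$; by scaling $z$ by a suitable positive rational and clearing denominators we can arrange that $z$ and $u$ agree in the $k$-th coordinate, i.e.\ $z_k = u_k \ne 0$; then $u - z \in \mathcal{Z}$ has $k \notin \supp(u-z)$, so $\lvert\supp(u-z)\rvert < \lvert\supp(u)\rvert$, and $u - z \ne 0$ since $z \ne u$. The subtlety is checking $\supp((u-z)^\pm) \subset \supp(v^\pm)$: for each coordinate $i$, if $(u-z)_i > 0$ then I need $i \in \supp(v^+)$. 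Since $\supp(z) \subset \supp(u)$ and $\supp(u^\pm)\subset\supp(v^\pm)$, every $i \in \supp(u)$ already lies in $\supp(v^+) \cup \supp(v^-)$, but I must rule out the wrong sign. Here is where one has to be careful: it is possible that $(u-z)_i$ has the opposite sign to $u_i$. The fix is to choose the scaling of $z$ more carefully — take $\lambda = \min_{i \in \supp(u)} \lvert u_i / z_i\rvert$ over coordinates where $z_i$ has the \emph{same sign as} $u_i$ (possibly after replacing $z$ by $-z$ to ensure at least one such coordinate exists), clear denominators, and use $u - \lambda z$; then on coordinates where $u_i, z_i$ have the same sign, $(u - \lambda z)_i$ keeps that sign (or vanishes), and on coordinates where they have opposite signs, $(u-\lambda z)_i$ again keeps the sign of $u_i$ (it only moves further in that direction). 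Either way $\supp((u-\lambda z)^\pm) \subset \supp(u^\pm) \subset \supp(v^\pm)$, and at the minimizing coordinate the entry vanishes, strictly shrinking the support. Iterating (or just invoking minimality of $u$) gives the contradiction, so $u \in \mathcal{Z}^\dag$ and we are done.

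One should double-check the edge case where after the sign normalization $z$ and $u$ have \emph{no} common-sign coordinate at all — but that cannot happen once we are allowed to replace $z$ by $\pm z$, since $\supp(z)$ is nonempty and contained in $\supp(u)$, so at least one coordinate of one of $\pm z$ agrees in sign with the corresponding coordinate of $u$. With that observation the argument closes cleanly, and the whole proof is elementary linear algebra over $\mathbb{Q}$ followed by clearing denominators to stay in $\mathbb{Z}^\ell$.
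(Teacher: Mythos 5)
Your overall strategy --- descent on $|\supp(\cdot)|$ via a sign-preserving combination $u-\lambda z$, with $\lambda$ an extremal ratio of coordinates --- is essentially the paper's proof (the paper phrases it as induction on $|\supp(v)|$ and forms $y=u_iw-w_iu$, but it is the same reduction), and your sign analysis is correct: after replacing $z$ by $-z$ if necessary, taking the minimizing ratio over the same-sign coordinates does give $\supp((u-\lambda z)^{\pm})\subset\supp(u^{\pm})$ while annihilating one coordinate. You also correctly identify that the signed containment is the real content of the lemma.

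There is, however, one genuine gap: you do not secure that the new vector is nonzero. The definition of $\mathcal{Z}^\dag$ only excludes \emph{integer} multiples, so the hypothesis $u\notin\mathcal{Z}^\dag$ merely supplies some $z$ with $\supp(z)\subset\supp(u)$ that is not an integer multiple of $u$; it may perfectly well be a non-integer \emph{rational} multiple. For instance, if $u=2w$ with $w$ primitive and the relations supported inside $\supp(u)$ are exactly $\mathbb{Z}w$, then $u\notin\mathcal{Z}^\dag$ with witness $z=w=u/2$; your minimal-support element could be this $2w$, since $w$ and $2w$ have the same support and minimality of $|\supp(\cdot)|$ does not distinguish them. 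In that case the rescaled $z$ equals $u$ exactly, $u-\lambda z=0$, the descent yields nothing, and indeed your claim ``the minimal-support element lies in $\mathcal{Z}^\dag$'' is false for that choice of $u$. So the step ``$u-z\neq 0$ since $z\neq u$'' is exactly where the argument breaks. The paper avoids this by first normalizing to $w=v/\gcd_i v_i$: for a primitive vector any integral rational multiple is an integer multiple, so the witness is genuinely not proportional to $w$ over $\mathbb{Q}$ and the combination is nonzero. Your proof is repaired the same way --- choose $u$ primitive (divide by the gcd of its coordinates, which preserves the signed supports) before running the descent --- but as written this case is not handled.
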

\begin{proof}  Let $w=v/\gcd_i v_i$.  If $w \in  \mathcal Z^\dag$ let 
$u=w$ and we are done, so we may assume that $w \in  \mathcal Z \setminus \mathcal Z^\dag$. If $\vert \supp(v)\vert = 1$ then $w \in \mathcal Z^\dag$ automatically, so we assume that $\vert \supp(v)\vert \geqslant 2$ and proceed by induction on $\vert \supp(v)\vert$. Since $w \in \mathcal{Z} \setminus \mathcal{Z}^\dag$ is non-zero there exists $u  \in \mathcal Z \setminus \{0\}$
with  $\supp(u)\subset  \supp(w)$  but which is not an integer multiple of  $w$.
Select $\lambda:=\min_{i: u_i\ne 0}  |w_i/u_i|$, which is $>0$ as $\supp(u)\subset  \supp(w)$.
Pick $i$ so that  $w_i=\pm \lambda u_i$ with $u_i\ne 0$, and then 
adjust the sign of $u$ so that  $u_i>0$. Now let $y:=u_i w-w_i u$, so that $y_i=0$ and for all $j$ either 
$y_j$ equals $0$ or has the same sign as $w_j$, since 
$|w_i  u_j|=\lambda |u_i  u_j| = |u_i| \cdot \lambda | u_j|\leqslant |u_i  w_j|$.
Therefore $\supp(y)\subset  \supp(w)\setminus\{ i\}$ with 
$\supp(y^\pm)\subset  \supp(w^\pm)$. Note further that $y \neq 0$, since if $0=y = u_i w - w_i u$ then $w_i \neq 1$, since $u$ is not an integer multiple of $w$, but this in turn contradicts the coprimality of the coordinates of $w$. 

Since $y \in \mathcal{Z} \setminus \{0\}$, by the induction hypothesis there exists 
$u\in  \mathcal Z^\dag$ for which  
\[
\supp(u^\pm)\subset \supp(y^\pm)\subset \supp(w^\pm)=\supp(v^\pm). \qedhere
\]
\end{proof}

We may iterate this argument to entirely decompose elements $v \in\mathcal{Z}$ in terms of a combination of elements $u \in \mathcal{Z}^\dag$. 

\begin{Lemma}[Decomposing using $\mathcal{Z}^\dag$]
\label{Lemma rational combo}
Any $v  \in  \mathcal Z \setminus \{0\}$ can be written as $\sum_{j=1}^I \lambda_j u_j$ with each $u_j\in  \mathcal Z^\dag, \lambda_j\in \mathbb Q_{>0}$  and $I\leqslant \vert\supp(v)\vert$. Furthermore each  $\supp(u_j^\pm)\subset  \supp(v^\pm)$, so that $v^+= \sum_{j=1}^I \lambda_j u_j^+$ and $v^-= \sum_{j=1}^I \lambda_j u_j^-$.
\end{Lemma}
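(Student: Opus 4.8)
The plan is to argue by strong induction on $\vert\supp(v)\vert$, repeatedly invoking Lemma \ref{Lemma circuits exist} to peel off one element of $\mathcal Z^\dag$ at a time, each time scaling so as to annihilate one coordinate. Observe first that every nonzero $v\in\mathcal Z$ has $\vert\supp(v)\vert\geqslant 2$: a single nonzero coordinate would violate $\wt(v)=0$. Hence the smallest possible support size is $2$, and the base case gets absorbed into the inductive step, where the peeled remainder is forced to vanish.

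For the inductive step, let $v\in\mathcal Z\setminus\{0\}$. By Lemma \ref{Lemma circuits exist} there is $u\in\mathcal Z^\dag$ with $\supp(u^+)\subset\supp(v^+)$ and $\supp(u^-)\subset\supp(v^-)$; in particular $\supp(u)\subset\supp(v)$ and each nonzero $u_i$ has the same sign as $v_i$. Set $\lambda:=\min_{i\in\supp(u)} v_i/u_i$, a positive rational (the set $\supp(u)$ is nonempty and all the ratios are positive), and put $v':=v-\lambda u$. A short sign check — splitting into the cases $v_i>0$, $v_i<0$, $i\notin\supp(v)$, and within the first two into $i\in\supp(u)$ or not — shows that each coordinate $v'_i$ is either $0$ or of the same sign as $v_i$, so $\supp((v')^{\pm})\subset\supp(v^{\pm})$; moreover at any index attaining the minimum defining $\lambda$ one has $v'_i=0$, so $\vert\supp(v')\vert<\vert\supp(v)\vert$. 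Also $\wt(v')=0$ and $A_{\mat}v'=0$ by linearity. If $v'=0$ then $v=\lambda u$ and we are done with $I=1$. Otherwise choose a positive integer $q$ with $qv'\in\mathbb Z^\ell$; then $qv'\in\mathcal Z\setminus\{0\}$ and $\supp(qv')=\supp(v')$, so the induction hypothesis gives $qv'=\sum_{j=2}^{I}\mu_j u_j$ with $u_j\in\mathcal Z^\dag$, $\mu_j\in\mathbb Q_{>0}$, $I-1\leqslant\vert\supp(v')\vert\leqslant\vert\supp(v)\vert-1$, and $\supp(u_j^{\pm})\subset\supp((v')^{\pm})\subset\supp(v^{\pm})$. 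Dividing by $q$ and adding $\lambda u$ back gives $v=\lambda u+\sum_{j=2}^{I}(\mu_j/q)u_j$, an expression of the required form with at most $\vert\supp(v)\vert$ terms and all $\supp(u_j^{\pm})\subset\supp(v^{\pm})$.

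The remaining identities $v^+=\sum_j\lambda_j u_j^+$ and $v^-=\sum_j\lambda_j u_j^-$ then follow formally from $v=\sum_j\lambda_j u_j$ and the support-and-sign conditions: at a coordinate $i$ with $v_i>0$ every $u_{j,i}\geqslant 0$ (since $i\notin\supp(v^-)\supseteq\supp(u_j^-)$), so $(v^+)_i=v_i=\sum_j\lambda_j(u_j^+)_i$ and $(v^-)_i=0=\sum_j\lambda_j(u_j^-)_i$; the cases $v_i<0$ and $v_i=0$ are symmetric. I do not anticipate a genuine obstacle — the lemma is essentially just an iteration of Lemma \ref{Lemma circuits exist} — the only point needing care is that the peeled remainder $v'$ is in general only rational, not integral, so one must clear denominators before reapplying the (integer-valued) lemma, and check, as above, that this alters neither the support nor the sign pattern, which it does not since $q>0$.
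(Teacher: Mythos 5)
Your proof is correct and follows essentially the same route as the paper's: induct on $\vert\supp(v)\vert$, peel off a $u\in\mathcal Z^\dag$ via Lemma \ref{Lemma circuits exist}, subtract $\lambda u$ with $\lambda$ the minimal positive ratio to kill a coordinate while preserving the sign pattern, and recurse. Your extra step of clearing denominators before re-applying the induction hypothesis is a worthwhile refinement, since the remainder $v-\lambda u$ need not be integral and $\mathcal Z$ consists of integer vectors — a point the paper's proof passes over silently.
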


\begin{proof}  
By induction on $m:=\vert \supp(v)\vert$.
Select $u\in \mathcal Z^\dag$ by Lemma \ref{Lemma circuits exist},    let
$\lambda:=\min_{i: u_i\ne 0}  v_i/u_i$ (noting $v_i$ and $u_i$ have the same sign as $\supp(u^\pm)\subset  \supp(v^\pm)$), choose $i$ so that $v_i=\lambda u_i$, and let
$y:=  v- \lambda u$. Now 
$\supp(y)\subset  \supp(w)-\{ i\}$ so that $|\supp(y)|\leqslant m-1$. If $y=0$ (for example if $m=1$) then $v=\lambda u$. Otherwise the result follows by the induction hypothesis. 
\end{proof}

 The preceding lemmas may be combined to control the size of elements in $\mathcal{M}$.

\begin{Lemma}
\label{Theorem key bound on primitive elements}
If $m\in \mathcal{M}$ then $\Vert m \Vert_{\infty}\leqslant \ell \, \emph{Vol}^{\dag,\max}(H( A))$.  
\end{Lemma}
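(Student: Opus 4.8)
The plan is to take $m \in \mathcal{M}$, set $x = A_{\mat}m$ and $h = \wt(m)$ and write $n = m_h(x)$; by the first part of Lemma~\ref{Lemma minimal useless elements} we have $\supp(m) \cap \supp(n) = \emptyset$, and $v := m - n \in \mathcal{Z} \setminus \{0\}$ with $v^+ = m$ and $v^- = n$ (the decomposition $v = v^+ - v^-$ being forced by disjoint supports). First I would apply Lemma~\ref{Lemma rational combo} to $v$: this writes $v = \sum_{j=1}^I \lambda_j u_j$ with each $u_j \in \mathcal{Z}^\dag$, each $\lambda_j \in \mathbb{Q}_{>0}$, $I \leqslant \lvert \supp(v)\rvert \leqslant \ell$, and crucially $\supp(u_j^\pm) \subset \supp(v^\pm)$, so that $m = v^+ = \sum_j \lambda_j u_j^+$.

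The key point is now to bound the coefficients $\lambda_j$. For a fixed $j$, pick a coordinate $i$ with $(u_j)_i \neq 0$, say $(u_j)_i > 0$ after adjusting signs so that $i \in \supp(u_j^+) \subset \supp(v^+) = \supp(m)$. I claim $u_j^+ \leq_{\coord} m$: indeed the support containment already gives that $(u_j^+)_k \ne 0$ implies $m_k \ne 0$, but we need the inequality coordinatewise, which does not follow for free. Instead the cleaner route is to use minimality of $m$ in $\mathcal{M}$ more directly. Consider a single $u = u_j \in \mathcal{Z}^\dag$ appearing in the decomposition. Since $\supp(u^+)\subset\supp(m)$ and $\supp(u^-)\subset\supp(n)$, and since $\Vert u\Vert_\infty \leqslant \mathrm{Vol}^{\dag,\max}(H(A))$ by Lemma~\ref{Lemma circuit height bound}, the natural thing is to show that if some coordinate of $m$ exceeded $\ell\,\mathrm{Vol}^{\dag,\max}(H(A))$ we could subtract off an integer multiple of some $u_j$ and contradict minimality. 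Concretely: from $m = \sum_j \lambda_j u_j^+$ and $n = \sum_j \lambda_j u_j^-$, if $\Vert m\Vert_\infty > \ell\, \mathrm{Vol}^{\dag,\max}(H(A))$ then since there are at most $\ell$ terms, some $\lambda_j u_j^+$ has a coordinate exceeding $\mathrm{Vol}^{\dag,\max}(H(A)) \geqslant \Vert u_j\Vert_\infty \geqslant \Vert u_j^+\Vert_\infty$, forcing $\lambda_j > 1$. Then $m - u_j^+ + u_j^- \geq_{\coord} 0$ has weight $h$ (as $\wt(u_j) = 0$), represents $x$ (as $A_{\mat}u_j = 0$), and equals $(m - u_j) $ which — by checking $u_j^+ \leq_{\coord} m$, valid because $\lambda_j u_j^+ \leq_{\coord} m$ with $\lambda_j \geqslant 1$ and $u_j^+ \geqslant 0$ — is a nonnegative vector $\leq_{\coord} m$ lying in $\rep_h(x)$. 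Since it is obtained from $m$ by subtracting the nonzero $u_j^+$ and adding $u_j^-$ supported disjointly from $m$, one checks it is lexicographically smaller than... no: rather, $m - u_j^+ \in \mathcal{U}$ by the argument of Lemma~\ref{Lemma minimal useless elements} (it still out-represents $m_h(x)$), and $m - u_j^+ <_{\coord} m$, contradicting $m \in \mathcal{M}(\mathcal{U})$.

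The main obstacle I anticipate is getting the coordinatewise inequality $u_j^+ \leq_{\coord} m$ cleanly from the rational decomposition: support containment plus $\lambda_j \geqslant 1$ only gives $u_j^+ \leq_{\coord} \lambda_j u_j^+ \leq_{\coord} \sum_k \lambda_k u_k^+ = m$ because all the $\lambda_k u_k^+$ are coordinatewise nonnegative and the supports of the $u_k^+$ sit inside $\supp(m)$ — so actually this does go through, the inequality $\lambda_j u_j^+ \leq_{\coord} m$ being immediate from $m = \sum_k \lambda_k u_k^+$ with all summands $\geq_{\coord} 0$. So the real work is just the pigeonhole step ($\Vert m\Vert_\infty > \ell V$ forces some $\lambda_j > 1$ since $\Vert u_j^+\Vert_\infty \leqslant V$ for all $j$ and there are $\leqslant \ell$ terms) followed by invoking minimality via the $\mathcal{U}$-absorption argument already established in the proof of Lemma~\ref{Lemma minimal useless elements}. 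I would double-check the edge case where the maximal coordinate of $m$ is attained at an index $i$ for which several $u_j^+$ are simultaneously nonzero, but pigeonhole still applies to the sum at that fixed coordinate.
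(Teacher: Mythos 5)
Your overall route is the same as the paper's: set $v = m - m_h(x)$, note $v^+ = m$ and $v^- = m_h(x)$ by the disjoint-support part of Lemma \ref{Lemma minimal useless elements}, decompose $v = \sum_j \lambda_j u_j$ via Lemma \ref{Lemma rational combo}, bound $\Vert u_j\Vert_\infty$ via Lemma \ref{Lemma circuit height bound}, and show that no $\lambda_j$ can exceed $1$ (your contrapositive-plus-pigeonhole phrasing is equivalent to the paper's direct claim that every $\lambda_j \leqslant 1$ followed by the triangle inequality). The pigeonhole step and the observation $\lambda_j u_j^+ \leq_{\coord} m$ (hence $u_j^+ \leq_{\coord} m$, and symmetrically $u_j^- \leq_{\coord} n$ where $n = m_h(x)$) are both fine.

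The gap is in the final contradiction. You assert that $m - u_j^+ \in \mathcal{U}$ ``by the argument of Lemma \ref{Lemma minimal useless elements} (it still out-represents $m_h(x)$)'', but $m - u_j^+$ lies in $\rep_{h'}(x')$ for $h' = h - \wt(u_j^+)$ and $x' = x - A_{\mat}u_j^+$, so it is not competing with $m_h(x)$ at all; to place it in $\mathcal{U}$ you must exhibit a lex-smaller element of $\rep_{h'}(x')$. The natural witness is $n - u_j^-$, which does lie in $\rep_{h'}(x')$, but you would then need $n - u_j^- <_{\lex} m - u_j^+$, i.e.\ $u_j <_{\lex} v$, and this does not follow from the support containments alone: a priori the first nonzero coordinate of $v - u_j$ could lie in $\supp(v^-)$ and be negative, in which case your witness only shows $n - u_j^- \in \mathcal{U}$, which contradicts nothing. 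This is exactly the subtlety that the second conclusion of Lemma \ref{Lemma minimal useless elements} was set up to handle: since $u_j \in \mathcal{Z}\setminus\{0\}$ with $u_j^+ \leq_{\coord} m$ and $u_j^- \leq_{\coord} n$, that conclusion forces $u_j^+ = m$ and $u_j^- = n$; but then $\lambda_j u_j^+ = \lambda_j m \leq_{\coord} \sum_k \lambda_k u_k^+ = m$ with $\lambda_j > 1$ forces $m = 0$, which is absurd. Replacing your ``$m - u_j^+ \in \mathcal{U}$'' step with this one-line appeal to the lemma you already cited closes the argument and makes it coincide with the paper's proof.
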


\begin{proof}  Let $x=A_{\mat} m$ and $h=\wt(m)$. By Lemma \ref{Lemma minimal useless elements},
letting $u:=m-m_h(x)\in \mathcal{Z}$ we have $u \neq 0$, and if  $v \in \mathcal{Z} \setminus \{0\}$ with $v^+ \leq_{\coord} u^+$ and $v^- \leq_{\coord} u^-$ then $v=u$. Now by Lemma  \ref{Lemma rational combo} we can write
$u=\sum_{j=1}^I \lambda_j u_j$ with each $u_j\in  \mathcal Z^\dag, \lambda_j\in \mathbb Q_{>0}$  and $I\leqslant \vert \supp(u)\vert$, where each  $\supp(u_j^\pm)\subset  \supp(u^\pm)$, so that $u^+ = \sum_{j=1}^I \lambda_j u_j^+$ and $u^- = \sum_{j=1}^I \lambda_j u_j^-$.

We claim each $\lambda_j\leqslant 1$, else
  $u_j^+ <_{\coord} \lambda_j u_j^+ \leq_{\coord} u^+$ and $u_j^- <_{\coord} \lambda_j u_j^- \leq_{\coord}  u^-$. Then applying Lemma \ref{Lemma minimal useless elements} as above with $v=u_j$ we conclude $u_j = u$, but this contradicts the strict inequality  $u_j^+ <_{\coord} u^+$. 

 Therefore, by the triangle inequality,
\[
\Vert m\Vert_{\infty}  \leqslant \Vert u\Vert_{\infty}  \leqslant \sum_{j=1}^I \lambda_j \Vert u_j\Vert_{\infty} \leqslant I \cdot \max_j \Vert u_j\Vert_{\infty} 
\leqslant \ell \cdot \max_{u\in  \mathcal Z^\dag} \Vert u\Vert_{\infty} 
\]
and the   result then follows from Lemma \ref{Lemma circuit height bound}.
\end{proof}

Substituting this control on $\mathcal{M}$ into the Proposition \ref{Nath-Ruz} will quickly resolve Theorem \ref{Theorem:improved_Khovanskii}. 
\begin{proof} [Proof of Theorem \ref{Theorem:improved_Khovanskii}]
Define
\[
P_ A(x) :=  \frac{ 1}{(\ell-1)!} \sum_{ T\subset \mathcal M } (-1)^{|T|}   (x-\wt(m_T)+\ell-1)\cdots (x-\wt(m_T)+1).
\]
We observe that 
\[
  \frac{(h-\wt(m_T)+\ell-1)\cdots (x-\wt(m_T)+1)}{(\ell-1)!} =  \binom{h-\wt(m_T)+\ell-1}{\ell-1}
\]
for all integers $h\geqslant \wt(m_T)-\ell+1$. Therefore, by Proposition \ref{Nath-Ruz},
\[
h\mathcal A=P_ A(h) \text{ for all } h\geqslant \wt(m_{\mathcal M(\mathcal U)})-\ell+1
\]
since $\max_{T\subset \mathcal M(\mathcal U)}  w_T= \wt(m_{\mathcal M(\mathcal U)})$, by definition.
Hence
 \[
N_{\operatorname{Kh}}( A)+\ell-1 \leqslant  \wt(m_{\mathcal M(\mathcal U)}) = \sum_{i=1}^\ell  \max_{m\in \mathcal M(\mathcal U)} |m|_i  \leqslant \ell \, \max_{m\in \mathcal M(\mathcal U)} \Vert m\Vert_\infty \leqslant \ell^2 \, \text{Vol}^{\dag,\max}(H(A)).
\]
 by Lemma \ref{Theorem key bound on primitive elements}.  This is the claimed bound on $N_{\Kh}(A)$. 
 \end{proof}

 
\section{Proof of Theorem \ref{Theorem:improved_structural_threshold}}
\label{Section:structure}

Let us first describe the general strategy. Let $A \subset \mathbb{Z}^d$ be finite with $0 \in \ex(H(A))$ and $\Lambda_A = \mathbb{Z}^d$. If $v \in \mathcal{P}(A)$, we aim to find $u$ and $w$ such that $v = u+w$, where:
\begin{itemize}
\item $u \in MA$, for a bounded $M$;
\item $w \in \mathcal{P}(B \cup \{0\})$, where $B \subset A$ is contained within a single facet of $H(A)$. One may also assume that this facet does not contain the origin. 
\end{itemize}
In some ways, this strategy is similar to \cite[Lemma 7.1]{GSW23}. However, in \cite[Lemma 7.1]{GSW23} the set $B$ was pre-determined at the outset, with the further assumptions that $v \in \mathcal{P}(A) \cap C_B$ and the further requirement that $u \in C_B$. It turns out to be much easier to prove the weaker version outlined above, where $B$ is found as a consequence of the decomposition $v = u+w$ rather than being fixed in the hypotheses. 

To find the decomposition $v = u+w$,  one may consider a representation $v = \sum_{i=1}^\ell \eta_i a_i$ in which $\eta \in \mathbb{Z}_{ \geqslant 0}^\ell$ and the weight $\wt(\eta)$ is minimal.  Recall that $A = \{a_1, \dots, a_\ell\}$ and $\wt(\eta)$ denotes $\sum_i \eta_i$.  In this section it will actually be more convenient to index $\eta$ directly by $A$ itself, so $v = \sum_{a \in A} \eta_a a$ and $\wt(\eta) = \sum_{a \in A} \eta_a$.  The basic idea is then to let
\[
u = \sum_{\substack{a \in A \\ \eta_a \text{ is small}}} \eta_a a \quad \text{and} \quad w = \sum_{\substack{a \in A \\ \eta_a \text{ is large}}} \eta_a a.
\]
If the set $\{a \in A: \, \eta_a \text{ is large}\}$ is not contained within a single facet of $H(A)$, one can use properties of the sets $\mathcal{Z}(A)$ and $\mathcal{Z}^\dag(A)$ established previously (Lemmas \ref{Lemma circuit height bound} and \ref{Lemma rational combo}) to reduce $\wt(\eta)$, contradicting minimality. 

Having proved this decomposition, suppose $v \in NH(A)$ as well, with $N$ at least the right-hand side of \eqref{eq:second_structure_bound}.  If the facet on which $B$ lies is defined by $\beta = 1$ for a linear map $\beta:\mathbb{R}^d \to \mathbb{R}$, then one can apply $\beta$ to both sides of the equation $v = u+w$. Writing $w = \sum_{b \in B} \lambda_b b$, we have $\beta(w) = \wt(\lambda)$, and this enables us to bound $\wt(\lambda)$ above in terms of $N$ and $M$. Putting everything together, we can place $v \in NA$ as required. (We extend the definition of $\wt$ to mean simply the sum of the entries of a vector.  We will also implicitly allow ourselves to enlarge the indexing set of a vector, by setting all previously undefined entries to zero.)

This method gives \eqref{eq:second_structure_bound}. In order to prove \eqref{eq:first_structure_bound}, which involves $\lvert\ex(H(A))\rvert$ instead of $\lvert A\rvert$, one first excises the contribution from non-extremal elements (Lemma \ref{Lemma:minelement_decomp} below). This is a simple additive--combinatorial argument, adapted from similar results in \cite{CG21} and \cite{GSW23}. This done, one proceeds as above but with $A$ replaced by $\ex(H(A))$. \\

To begin the proof proper, we state some standard results on convex polytopes. Let $A \subset \mathbb{Z}^d$ be finite, and assume $0 \in \ex(H(A))$ and $\spn(A) = \mathbb{R}^d$. Then from the structure theorem for convex polytopes \cite[Theorem 9.2]{Br83}, we know that there are linear maps $\beta_1,\dots,\beta_K, \gamma_1,\dots,\gamma_L, : \mathbb{R}^d \to \mathbb{R}$ for which
\[ H(A) = \bigcap\limits_{i=1}^K \{ x \in \mathbb{R}^d: \, \beta_i(x) \leqslant 1\} \cap \bigcap\limits_{j=1}^L \{x \in \mathbb{R}^d: \, \gamma_j(x) \geqslant 0\}\]
and the sets   $\{x \in H(A): \beta_i(x) = 1\}$ and $\{x \in H(A): \, \gamma_j(x) = 0\}$ form the facets of $H(A)$.  For each $i$ and $j$ we call $\{x \in H(A): \beta_i(x) = 1\}$ an \emph{outer facet} of $H(A)$ and $\{x \in H(A): \gamma_j(x) = 1\}$ an \emph{inner facet} of $H(A)$. 

We continue with a technical lemma which we will use to reduce $\wt(n)$ as discussed above. 

\begin{Lemma}[Preparation for reduction step]
\label{Lemma:reduction_prep}
Let $A \subset \mathbb{Z}^d$ be finite, and assume $0 \in \ex(H(A))$ and $\spn(A) = \mathbb{R}^d$. Let $S \subset A$, and suppose that $S$ does not lie in an outer facet of $H(A)$. Then for any linear map $\alpha : \mathbb{R}^d \to \mathbb{R}$ satisfying $\alpha(s) = 1$ for all $s \in S$ there exists $p \in H(A) \cap \spn(S) \cap \mathbb{Q}^d$ for which $\alpha(p) > 1$. 
\end{Lemma}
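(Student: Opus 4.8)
The statement says: if $S \subset A$ does not lie in any outer facet of $H(A)$, then any linear $\alpha$ with $\alpha \equiv 1$ on $S$ takes a value strictly greater than $1$ somewhere on $H(A) \cap \spn(S) \cap \mathbb{Q}^d$. The natural approach is a proof by contradiction: suppose $\alpha(p) \leqslant 1$ for all $p \in H(A) \cap \spn(S)$, and show that this forces $S$ to lie in an outer facet.

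First I would reduce to the question of what $\alpha$ does on $H(A)$ as a whole rather than on the slice $H(A) \cap \spn(S)$. Since $S \subset \spn(S)$, the polytope $Q := H(S) = H(A) \cap \spn(S)$ (well, $H(S) \subseteq H(A) \cap \spn(S)$; I would actually just work with $H(S)$, which is what I need since $S$'s points are in it) is a polytope on which $\alpha \equiv 1$ on the vertices $S$, hence $\alpha \equiv 1$ on all of $H(S)$ by linearity — so on the slice, $\alpha$ is identically $1$, and the hypothesis ``$\alpha(p) > 1$ somewhere in $H(A) \cap \spn(S)$'' would be \emph{false} if $H(A) \cap \spn(S) = H(S)$. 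This tells me the content of the lemma is that $H(A) \cap \spn(S)$ is strictly larger than $H(S)$: there are points of $H(A)$ in $\spn(S)$ that are not convex combinations of $S$ alone, and I must exhibit one with $\alpha > 1$ (and it can be taken rational since $H(A)$ and $\spn(S)$ are both rational objects, so their intersection is a rational polytope). So the real claim is: \emph{some point of $H(A)$ lying in $\spn(S)$ has $\alpha$-value $> 1$.}

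Next, the key geometric input. Consider the hyperplane $\Pi = \{x : \alpha(x) = 1\}$ inside $\spn(S)$; more precisely work inside the affine span of $S$, call it $W = \spn(S)$ (a linear subspace through $0$, but note $0 \notin \Pi$ since $\alpha(0) = 0 \neq 1$, so $\Pi \cap W$ is a proper affine hyperplane of $W$ containing $S$). Suppose for contradiction that $\alpha \leqslant 1$ on all of $H(A) \cap W$. I claim this makes $\{x \in H(A) : \alpha(x) = 1\}$ a face of $H(A)$ containing $S$. Here is the mechanism: $S$ spans the affine hyperplane $\Pi \cap W$ of $W$; I want to upgrade $\alpha$ to a genuine \emph{supporting} functional of $H(A)$. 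The point is that $\alpha$ restricted to $H(A)$ — if it were $\leqslant 1$ everywhere on $H(A)$, not just on the slice — would have $\{\alpha = 1\}$ as a face containing $S$, and since $S$ is $d-1$ dimensional in affine span (does not lie in a lower face — wait, it might), this face would be contained in some facet, which would be an outer facet (as it avoids $0$, because $\alpha(0)=0<1$), contradiction. The gap is going from ``$\alpha \leqslant 1$ on $H(A) \cap \spn(S)$'' to ``$\alpha \leqslant 1$ on $H(A)$,'' and this is exactly what must fail — so I should argue the contrapositive directly: assuming no $p$ works, I get $\alpha\leqslant 1$ on $H(A)\cap\spn(S)$, and I want to conclude $S$ lies in an outer facet. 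To do that, I use the structure theorem (already quoted in the excerpt) with the functionals $\beta_i, \gamma_j$. Write each $s \in S$; since $S$ doesn't lie in an outer facet, for each $i$ there's some $s \in S$ with $\beta_i(s) < 1$. The plan's crux: find a point $q \in H(A)$, in $\spn(S)$, with $\alpha(q) > 1$, by moving from the barycenter of $S$ (which has $\alpha = 1$) in a direction within $\spn(S)$ that increases $\alpha$ while staying in $H(A)$ — such a direction exists precisely because the barycenter lies in the relative interior of a face of $H(A)$ that is not cut out by $\{\alpha = 1\}$, since $S$ escapes every outer facet.

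\textbf{Main obstacle.} The hard part will be the last step: rigorously producing the point $q$ with $\alpha(q) > 1$. I expect to argue as follows. Let $s_0$ be the barycenter of $S$; then $\alpha(s_0) = 1$ and $s_0 \in \spn(S) \cap H(A)$, and in fact $s_0$ lies in the relative interior of the face $G$ of $H(A)$ defined by those constraints tight at $s_0$ — I must check $G$ is not contained in any outer facet. Suppose $\beta_i(s_0) = 1$ for some $i$; since $s_0$ is an average of points of $S$ each with $\beta_i \leqslant 1$, this forces $\beta_i(s) = 1$ for all $s\in S$, i.e.\ $S$ lies in the $i$-th outer facet — contradiction. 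Hence $\beta_i(s_0) < 1$ for every $i$; only inner-facet constraints $\gamma_j(s_0) = 0$ can be tight. Now move from $s_0$ in a direction $\delta \in \spn(S)$ with $\delta$ in the cone of the tight $\gamma_j$'s (so $\gamma_j(s_0 + t\delta) \geqslant 0$ stays valid for small $t > 0$) and with $\alpha(\delta) > 0$; such $\delta$ exists unless $\alpha$ is a nonnegative combination of the tight $\gamma_j$ on $\spn(S)$ plus something vanishing on $\spn(S)$ — but $\alpha \equiv 1 \neq 0$ on $S \subset \spn(S)$ whereas every $\gamma_j$ with $\gamma_j(s_0)=0$ is $\leqslant 0$ hence, being linear and nonpositive on a set spanning... hmm, this needs care: the $\gamma_j$ need not vanish on all of $\spn(S)$. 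I would handle this via an LP-duality / Farkas argument in the subspace $\spn(S)$: either there is a feasible ascent direction $\delta$ (giving the desired $q = s_0 + t\delta$ for small rational $t$, then scaled to be genuinely in $H(A)$ with $\alpha(q) > 1$, and rational by choosing $t$ rational), or $\alpha|_{\spn(S)}$ is dominated by the tight $\gamma_j|_{\spn(S)}$, which would force $\alpha(s_0) \leqslant 0 < 1$, contradiction. Making this dichotomy precise, and confirming the rationality of the resulting $p$, is where the real work lies; everything else is bookkeeping with the structure theorem.
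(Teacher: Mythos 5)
Your first step coincides exactly with the paper's: since $S$ is not contained in any outer facet, for each $i$ some $s \in S$ has $\beta_i(s) < 1$, so the barycentre $q = \tfrac{1}{\vert S\vert}\sum_{s\in S} s$ satisfies $\beta_i(q) < 1$ for every $i$, while $\alpha(q)=1$ and $q \in \spn(S) \cap \mathbb{Q}^d$. But from there your proposal stalls: you set up a search for a feasible ascent direction $\delta \in \spn(S)$ subject to the tight inner-facet constraints, propose to resolve it by a Farkas/LP-duality dichotomy in the subspace $\spn(S)$, and then explicitly defer that argument (``this is where the real work lies''). As written this is a plan with the crux missing, not a proof.

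The missing observation is that the ascent direction is simply $\delta = q$ itself, i.e.\ one should scale radially from the origin. Because $0 \in \ex(H(A))$, the inner-facet constraints $\gamma_j \geqslant 0$ are homogeneous, so $\gamma_j((1+\eps)q) = (1+\eps)\gamma_j(q) \geqslant 0$ for \emph{every} $\eps > 0$ — tight or not, these constraints cannot be violated by scaling up. The outer-facet constraints all have slack, $\hat\beta := \max_i \beta_i(q) < 1$, so $(1+\eps)q$ still satisfies $\beta_i \leqslant 1$ whenever $1+\eps \leqslant \hat\beta^{-1}$. Taking any rational $\eps$ in $(0,\hat\beta^{-1}-1)$ gives $p = (1+\eps)q \in H(A) \cap \spn(S) \cap \mathbb{Q}^d$ with $\alpha(p) = 1+\eps > 1$, which is precisely the paper's proof. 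Your duality dichotomy could no doubt be pushed through (the second alternative would force $\alpha(q)=0$, a contradiction), but it is unnecessary machinery, and in the proposal it is left unexecuted together with the verification that the resulting $p$ can be taken rational — both of which the radial-scaling argument dispatches in one line.
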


\begin{proof}
 Each outer facet of $H(A)$ is defined by $\{x \in H(A): \beta_i(x) = 1\}$ for some linear map $\beta_i$.
 Now $\beta_i(s) \leqslant 1$ for all $s \in S$ as $S \subset A \subset H(A)$, and we cannot have equality for all $s \in S $ as $S$ is not contained in any outer facet by the hypothesis, and so the \emph{barycentre}
\[
q := \frac{1}{\vert S \vert} \sum_{s \in S} s
\]
of $S$ satisfies $\beta_i(q) < 1$. Letting $\hat{\beta} = \max_i \beta_i(q) \in [0, 1)$, we see that $q$ lies inside $\hat{\beta} H(A)$, and so
for any $\eps \in (0, \hat{\beta}^{-1} - 1) \cap \mathbb{Q}$ the point $p = (1+\eps)q$ lies in $H(A)$.  This $p$ is clearly also in $\spn(S)$ and $\mathbb{Q}^d$, and satisfies $\alpha(p) = (1 + \eps)\alpha(q) = 1 + \eps >1$.
\end{proof}

We now use this observation to prove the existence of certain relations between sums of elements in $A$. 
\begin{Lemma}[Reduction step]
\label{Lemma:reduction_step}
Let $A \subset \mathbb{Z}^d$ be finite, and assume $0 \in \ex(H(A))$ and $\spn(A) = \mathbb{R}^d$. Suppose the elements of $S \subset A$ are linearly independent, and  that $S$ is not a subset of any outer facet of $H(A)$. Then there exist non-negative integers
 $\{\lambda_s\}_{s \in S}$ and $\{ \rho_a\}_{a \in A \setminus \{0\}}$ such that 
\[
 \sum_{s \in S} \lambda_s s = \sum_{a \in A \setminus \{0\}} \rho_a a \quad \text{and} \quad \wt(\lambda) > \wt(\rho),
\]
where $\lambda_s, \rho_a \leqslant \vol^{\dagger, \max}(H(A)) \text{ for all } s \in S\text{ and } a \in A\setminus  \{0\}$.
\end{Lemma}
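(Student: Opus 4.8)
The plan is to start from the rational point $p \in H(A) \cap \spn(S) \cap \mathbb{Q}^d$ with $\alpha(p) > 1$ produced by Lemma \ref{Lemma:reduction_prep}, applied to the linear map $\alpha$ determined by $\alpha(s) = 1$ for $s \in S$ (which exists and is unique since $S$ is linearly independent and spans a subspace on which $\alpha$ is forced). Since $p \in H(A)$ we may write $p = \sum_{a \in A} c_a a$ with $c_a \in \mathbb{Q}_{\geqslant 0}$ and $\sum_a c_a = 1$; and since $p \in \spn(S)$ we may also write $p = \sum_{s \in S} d_s s$ with $d_s \in \mathbb{Q}$, uniquely. Applying $\alpha$ to the second expression gives $\alpha(p) = \sum_s d_s = \wt(d)$, so $\wt(d) > 1$. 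Clearing denominators, there is a positive integer $D$ so that $D d_s$ and $D c_a$ are all integers; set $\lambda_s := D d_s$ and $\rho_a := D c_a$ for $a \in A \setminus \{0\}$ (discarding the $a = 0$ term, which contributes nothing to either sum). Then $\sum_{s \in S} \lambda_s s = D p = \sum_{a \in A \setminus \{0\}} \rho_a a$, and $\wt(\lambda) = D \wt(d) > D = D \sum_a c_a \geqslant \sum_{a \in A \setminus \{0\}} \rho_a = \wt(\rho)$, with the middle inequality using $\wt(d) > 1$; here one must also check $\lambda_s \geqslant 0$, which follows because $d_s \geqslant 0$: indeed $p$ lies in $H(A) \cap \spn(S)$, and since $S$ is a linearly independent subset of $A$ lying (at most) on one facet... — more carefully, one should argue $d_s \geqslant 0$ by noting that $p \in H(A)$ forces $\beta_i(p) \leqslant 1$ and $\gamma_j(p) \geqslant 0$ for all the defining functionals, and combining this with $p \in \spn(S)$; alternatively, shrink $\eps$ in Lemma \ref{Lemma:reduction_prep} so that $p$ stays in the relative interior of the face spanned by $S$, guaranteeing strictly positive barycentric-type coordinates $d_s$.

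The remaining, and genuinely delicate, point is the size bound $\lambda_s, \rho_a \leqslant \vol^{\dagger,\max}(H(A))$. The naive clearing-of-denominators $D$ is not controlled, so instead I would obtain the relation $\sum_{s\in S}\lambda_s s = \sum_{a}\rho_a a$ directly as an element of the lattice $\mathcal{Z}(A)$ (after padding $\lambda$ and $\rho$ to vectors indexed by all of $A$ and forming $z := \lambda \oplus (-\rho) \in \mathbb{Z}^\ell$, which satisfies $A_{\mat} z = 0$; one also arranges $\wt(z) = 0$ by a harmless adjustment, e.g. absorbing the weight discrepancy into the coefficient of $0 \in A$, which changes neither sum). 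Then invoke Lemma \ref{Lemma rational combo} to write $z = \sum_{j=1}^I \mu_j u_j$ with $u_j \in \mathcal{Z}^\dag(A)$, $\mu_j \in \mathbb{Q}_{>0}$, and — crucially — $\supp(u_j^\pm) \subset \supp(z^\pm)$, so the positive part ($S$-side) and negative part ($A\setminus\{0\}$-side) decompose compatibly. By Lemma \ref{Lemma circuit height bound} each $\|u_j\|_\infty \leqslant \vol^{\dagger,\max}(H(A))$. The point is now that any \emph{single} $u_j$ with $\wt(u_j) \neq 0$... — no: elements of $\mathcal{Z}^\dag$ have weight zero by definition of $\mathcal{Z}$. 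So instead one observes that $\wt(z) = 0$ yet we need $\wt(\lambda) > \wt(\rho)$, i.e. $\wt(z^+) > \wt(z^-)$, which for a weight-zero vector is impossible — signalling that one should \emph{not} force $\wt(z) = 0$, and instead keep the genuine inequality $\wt(\lambda) > \wt(\rho)$ coming from $\alpha(p) > 1$.

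Reconciling these, the cleanest route is: take the single circuit. Since $S$ is linearly independent but $S \cup \{p\text{'s support in }A\setminus\{0\}\}$ carries the dependence $\sum_s \lambda_s s - \sum_a \rho_a a = 0$, pass to a minimal such dependence — a circuit supported on $S' \cup A'$ with $S' \subseteq S$, apply the Cramer's-rule computation of Lemma \ref{Lemma circuit height bound} (with the $b_i = (a_i, 1)$ trick) to get a primitive integer relation $\sum_{s \in S'} \lambda_s' s = \sum_{a \in A'} \rho_a' a$ with all $|\lambda_s'|, |\rho_a'| \leqslant \vol^{\dagger,\max}(H(A))$, and check the weight inequality survives: applying $\alpha$ to the primitive relation gives $\wt(\lambda') = \alpha\!\left(\sum_a \rho_a' a\right)$, and since every $a \in A' \subseteq A$ has $\alpha(a) \leqslant 1$ — because $\alpha$ is (a constant multiple of) the facet functional $\beta_i$ and $a \in H(A)$ — we get $\alpha\!\left(\sum_a \rho_a' a\right) \leqslant \sum_a \rho_a' = \wt(\rho')$, hence $\wt(\lambda') \leqslant \wt(\rho')$, the \emph{wrong} direction. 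The fix is to not use a circuit but to keep $z$ itself and run Lemma \ref{Lemma rational combo} while carrying $\alpha$: apply $\alpha$ to $z^+ = \sum_j \mu_j u_j^+$ and $z^- = \sum_j \mu_j u_j^-$; since $\alpha = 1$ on $S \supseteq \supp(z^+)$ and $\alpha \leqslant 1$ on $A$, at least one $u_{j_0}$ must have $\alpha(u_{j_0}^+) > \alpha(u_{j_0}^-)$, i.e. $\wt(u_{j_0}^+) > \alpha\!\left(\sum u_{j_0,a}^- a\right)$; rescale $u_{j_0}$ to be a primitive integer vector and take $\lambda_s := (u_{j_0}^+)_s$, $\rho_a := (u_{j_0}^-)_a$. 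This gives integers bounded by $\vol^{\dagger,\max}(H(A))$ (Lemma \ref{Lemma circuit height bound}), non-negative by construction, satisfying the relation (as $u_{j_0} \in \mathcal{Z}$), and with $\wt(\lambda) > \wt(\rho)$ provided $\alpha(a) < 1$ strictly fails to hold for too many $a$ in $\supp(u_{j_0}^-)$ — precisely, we need $\wt(u_{j_0}^+) > \wt(u_{j_0}^-)$, and since $u_{j_0} \in \mathcal{Z}$ has $\wt(u_{j_0}) = 0$ this again says $\wt(u_{j_0}^+) = \wt(u_{j_0}^-)$. I expect the actual argument in the paper sidesteps this by \emph{not} demanding $\wt(z) = 0$: one works in the larger lattice $\{z : A_{\mat}z = 0\}$ (dropping the weight constraint), for which the circuit bound of Lemma \ref{Lemma circuit height bound} still applies verbatim (the $(a_i,1)$ padding already encodes weight, so circuits there automatically have a prescribed weight behaviour), and the strict inequality $\alpha(p) > 1$ translates directly into $\wt(\lambda) > \wt(\rho)$ for the chosen circuit. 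The main obstacle is therefore this bookkeeping: correctly identifying which lattice to decompose in so that (i) Lemma \ref{Lemma circuit height bound}'s bound applies, (ii) the supports split as $S$ versus $A \setminus\{0\}$, and (iii) the strict weight inequality from $\alpha(p)>1$ is preserved down to a single primitive relation.
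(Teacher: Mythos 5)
There is a genuine gap, and it is exactly the point at which you talk yourself out of the correct argument. You correctly assemble all the right ingredients — Lemma \ref{Lemma:reduction_prep} to get $p \in H(A) \cap \spn(S) \cap \mathbb{Q}^d$ with $\alpha(p) > 1$, the idea of forming an integer vector $z$ with $A_{\mat}z = 0$, absorbing the weight discrepancy into the coefficient of $0 \in A$ to force $\wt(z) = 0$, and then decomposing via Lemmas \ref{Lemma rational combo} and \ref{Lemma circuit height bound} — and this \emph{is} the paper's proof. But you then reject it on the grounds that ``$\wt(z)=0$ yet we need $\wt(z^+) > \wt(z^-)$, which for a weight-zero vector is impossible.'' This objection is mistaken: the final $\lambda$ and $\rho$ are \emph{not} $u^+$ and $u^-$ in their entirety, because $\rho$ is indexed by $A \setminus \{0\}$ and therefore \emph{omits} the coefficient $u_0$ of the zero vector. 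The zero vector is a slack variable: it carries weight but contributes nothing to the relation $A_{\mat}u = 0$. Concretely, set $z_a = L(\delta_a - \gamma_a)$ for $a \neq 0$ (where $p = \sum_s \gamma_s s = \sum_a \delta_a a$ with $\wt(\delta)=1$, $\delta_a \geqslant 0$, and $L$ clears denominators) and $z_0 = L(\wt(\gamma) - \wt(\delta) + \delta_0)$; then $z_0 > 0$ precisely because $\alpha(p) = \wt(\gamma) > 1 = \wt(\delta)$, and $\supp(z^-) \subset S$. Since $z^+ = \sum_j \eta_j u_j^+$ with $\eta_j > 0$ and $z_0 > 0$, some circuit $u = u_j \in \mathcal{Z}^\dag$ has $u_0 > 0$ and $\supp(u^-) \subset S$. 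Taking $\lambda_s = -u_s$ on $\supp(u^-)$ and $\rho_a = u_a$ on $\supp(u^+)\setminus\{0\}$ gives $\wt(\lambda) = \wt(u^-) = \wt(u^+) = u_0 + \wt(\rho) > \wt(\rho)$, with the size bounds from Lemma \ref{Lemma circuit height bound}. Your closing guess — that the paper drops the constraint $\wt(z)=0$ — is the opposite of what happens.

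Two smaller remarks. First, your worry in the opening paragraph about whether the coefficients $d_s$ (the paper's $\gamma_s$) are non-negative is a red herring: they need not be, and the non-negativity of the final $\lambda$ comes for free from taking the negative part $u^-$ of the circuit, not from the sign of $\gamma$. Second, your ``wrong direction'' computation with a single circuit applied to $\alpha$ is correct as far as it goes and correctly diagnoses that an arbitrary circuit will not do; the selection principle you were missing is ``choose a circuit whose positive support contains $0$,'' which is available exactly because $z_0 > 0$. As written, the proposal does not constitute a proof.
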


\begin{proof} As $S$ is linearly independent, we know that $0\not\in S$ and there exists 
a linear map $\alpha: \mathbb{R}^d \to \mathbb{R}$  with $\alpha(s) = 1$ for all $s \in S$. 
From Lemma \ref{Lemma:reduction_prep}, choose $p \in H(A) \cap \spn(S) \cap \mathbb{Q}^d$ with $\alpha(p) > 1$. 
Therefore $p = \sum_{s \in S} \gamma_s s$ for some coefficients $\gamma_s \in \mathbb{Q}$, and 
$p= \sum_{a \in A} \delta_a a$ where $\wt(\delta) = 1$ and $\delta_a \in [0,1] \cap \mathbb{Q}$ for all $a \in A$.  Then
\[
\sum\limits_{s \in S} \gamma_s s = p = \sum\limits_{a \in A} \delta_a a \quad \text{and} \quad \wt(\gamma) = \alpha(p) > 1 = \wt(\delta).
\]
Let $L$ be the least common denominator of all the $\gamma_s$ and $\delta_a$.  Define $z_a = L(\delta_a - \gamma_a)$ for $a \in A\setminus\{0\}$, and
\[
z_0 = L\big(\wt(\gamma) - \wt(\delta) + \delta_0\big) > 0,
\]
so that 
\[
\wt(z)=z_0+\sum_{a \in A\setminus\{0\}} L(\delta_a - \gamma_a)=0.
\]
We then have $z\in  \mathcal{Z}$ (as defined in \eqref{eq:def_of_Z}, where we identify $\mathbb{Z}^A$ with $\mathbb{Z}^{\ell}$)
and $\supp(z^-) \subset S$.

 By Lemma \ref{Lemma rational combo}, we write $z = \sum_j \eta_j u_j$ with each $u_j \in \mathcal{Z}^{\dagger}$, $\eta_j \in \mathbb{Q}_{>0}$, and   $\supp(u_j^{\pm}) \subset \supp(z^{\pm})$. Now $0 \in \supp(z^+)$ as $z_0 >0$,  so $0\in \supp(u^+) \subset \supp(z^+)$ for some $u=u_j\in \mathcal{Z}^{\dagger}$ and $\supp(u^-) \subset \supp(z^-) \subset S$. Define
 \[
\lambda_s = \begin{cases} (-u_s) & \text{if } s \in \supp(u^-) \\
0 & \text{if } s \in S \setminus \supp(u^-), \end{cases}
\text{ and } \rho_a = \begin{cases} u_a & \text{if } a \in \supp(u^+) \setminus \{0\} \\
0 & \text{otherwise.} \end{cases}
\]
Then, since  $u \in \mathcal{Z}$, we have
\begin{align*}
\sum_{s\in S}  \lambda_s s = \sum_{\substack{a \in \supp(u^-)}}(-u_a)a &= \sum_{\substack{a \in \supp(u^+)}}u_a a = \sum_{a \in A \setminus \{0\}} \rho_a a \\
\text{and} \quad \wt(\lambda) = \sum_{\substack{a \in \supp(u^-)}}(-u_a) &=   \sum_{\substack{a \in \supp(u^+) }} u_a
> \wt(\rho).
\end{align*}
The final inequality uses the fact that $u_0>0$. The condition $ \max_{s\in S, a\in A}\lambda_s, \rho_a=\Vert u\Vert_{\infty} \leqslant \vol^{\dagger,\max}(H(A))$ then follows from Lemma \ref{Lemma circuit height bound}.
\end{proof}

Using the relation from the previous lemma, we can derive the decomposition $v = u+w$ as discussed at the start of the section. 

\begin{Lemma}[Regular representation]
\label{Lemma:regular representation}
Let $A \subset \mathbb{Z}^d$ be finite, and assume $0 \in \ex(H(A))$ and $\spn(A) = \mathbb{R}^d$. Let $v \in \mathcal{P}(A)$. Then there is a decomposition $v = u + w$ and an outer facet $F$ of $H(A)$ for which
\begin{itemize}
\item $w \in \mathcal{P}(B \cup \{0\})$, where $B = A \cap F$;
\item  $u \in MA$ where $M =(\vert A\vert - 1 - \vert B\vert) (\vol^{\dagger, \max}(H(A)) - 1)$.
\end{itemize}
\end{Lemma}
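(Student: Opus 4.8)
The plan is to start from a minimal-weight representation $v = \sum_{a \in A} \eta_a a$ with $\eta \in \mathbb{Z}_{\geqslant 0}^A$ and $\wt(\eta)$ as small as possible, and to split the support according to a threshold $T := \vol^{\dagger,\max}(H(A)) - 1$. Set $B' := \{a \in A : \eta_a > T\}$ — the ``large'' coefficients — and $w := \sum_{a \in B'} \eta_a a$, $u := \sum_{a \in A \setminus B'} \eta_a a$, so that trivially $v = u + w$, and $u \in MA$ because $0 \in A$ and $\wt(\eta|_{A \setminus B'}) \leqslant (\vert A \vert - 1 - \vert B' \vert) T$ once we know $0 \in B' \cup$ (something we can dump into $u$); more precisely each of the at most $\vert A \setminus (B' \cup \{0\})\vert$ nonzero coefficients outside $B'$ is $\leqslant T$, giving $u \in MA$ with $M = (\vert A \vert - 1 - \vert B' \vert) T$ (padding with copies of $0 \in A$ to make the weight exactly $M$). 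So the entire content is to show that $B'$ lies in a single outer facet $F$ of $H(A)$; then $B := A \cap F \supseteq B'$, and $w \in \mathcal{P}(B \cup \{0\})$ follows since $w$ is a non-negative integer combination of elements of $B' \subseteq B$, and $\vert B \vert \geqslant \vert B' \vert$ only improves the bound on $M$.

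The crux is therefore: \emph{if $\wt(\eta)$ is minimal, then $B' = \{a : \eta_a > T\}$ lies in some outer facet.} I would argue by contradiction. Suppose $B'$ is not contained in any outer facet. First pass to a linearly independent subset: since $\spn$ of $B'$ may be lower-dimensional, I want a linearly independent $S \subseteq B'$ that still fails to lie in an outer facet — this needs a small argument, e.g. if $B'$ is not in any outer facet then some linearly independent subset already witnesses this (if every maximal independent subset of $B'$ lay in an outer facet, one checks $B'$ would too, since a facet is determined by $d$ affinely independent points and the relevant $\beta_i$ is pinned down). With such an $S$ in hand, apply Lemma \ref{Lemma:reduction_step}: there are non-negative integers $\{\lambda_s\}_{s \in S}$, $\{\rho_a\}_{a \in A \setminus \{0\}}$, each at most $\vol^{\dagger,\max}(H(A))$, with $\sum_{s \in S}\lambda_s s = \sum_{a \in A \setminus \{0\}} \rho_a a$ and $\wt(\lambda) > \wt(\rho)$. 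Now replace in the representation of $v$: subtract $\lambda_s$ from $\eta_s$ for each $s \in S$ and add $\rho_a$ to $\eta_a$ for each $a \in A \setminus \{0\}$. This is legal (keeps coefficients non-negative) precisely because $\eta_s > T = \vol^{\dagger,\max}(H(A)) - 1 \geqslant \lambda_s - 1$, i.e. $\eta_s \geqslant \lambda_s$, for every $s \in S \subseteq B'$; the value of $v$ is unchanged; and the weight strictly decreases by $\wt(\lambda) - \wt(\rho) > 0$, contradicting minimality.

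The main obstacle I anticipate is the bookkeeping around the linearly independent subset $S$ and making sure the threshold $T$ is exactly right so that the swap stays non-negative — one must be careful that $S \subseteq B'$ (so $\eta_s$ is genuinely ``large'') and that $\vol^{\dagger,\max}(H(A)) - 1$, not $\vol^{\dagger,\max}(H(A))$, is the correct cutoff to guarantee $\eta_s \geqslant \lambda_s$ while keeping $M$ as claimed. A secondary point is confirming $0 \notin B'$ is not needed — indeed $0$ contributes nothing to $w$ and can always be absorbed — and that after the subtraction the new $\eta$ still represents $v$ as a non-negative combination so that it is a legitimate competitor in the minimization. Once the facet containment is established, the identification $B = A \cap F$, the membership $w \in \mathcal{P}(B \cup \{0\})$, and the bound on $M$ are immediate, so I expect the write-up to be short after the reduction-step contradiction is in place.
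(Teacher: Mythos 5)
Your overall architecture (take a minimal-weight representation, threshold the coefficients at $\vol^{\dag,\max}(H(A))$, and use Lemma \ref{Lemma:reduction_step} to contradict minimality) is the paper's, and your treatment of the case where the large-coefficient set $B'$ is linearly independent is exactly the paper's Case I. The gap is the step you flag as ``a small argument'': that if $B'$ is not contained in any outer facet then some \emph{linearly independent} subset $S \subseteq B'$ is already not contained in any outer facet. Your parenthetical sketch does not establish this. If $B'$ does not span $\mathbb{R}^d$, its maximal independent subsets have fewer than $d$ elements and pin down no facet hyperplane; and even when $B'$ spans $\mathbb{R}^d$, two different maximal independent subsets may lie in two genuinely different outer facets, which your ``$\beta_i$ is pinned down'' reasoning does not exclude. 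More seriously, any correct proof of this reduction must use the hypothesis $0 \in \ex(H(A))$, which your sketch never invokes: take $d=2$ and let $B'$ be the three vertices of a triangle whose interior contains $0$ --- then every linearly independent subset of $B'$ (every pair, every singleton) lies in an outer facet (an edge), yet $B'$ lies in none. So the claim is false without extremality of $0$, and cannot follow from facet combinatorics alone.

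The claim \emph{is} true under the lemma's hypotheses, so your route can be completed, but it needs a real argument. One way: for a circuit $C \subseteq B'$ with dependence $\sum_{c \in C}\alpha_c c = 0$, apply to each $c \in C$ the functional of an outer facet containing the independent set $C \setminus \{c\}$; this forces either all $\alpha_c$ to share a sign (placing $0$ in the relative interior of $H(C)$ and contradicting $0 \in \ex(H(A))$), or $\sum_{c}\alpha_c = 0$ and $C$ lies in a single outer facet, after which an exchange argument propagates the containment from a maximal independent subset of $B'$ to all of $B'$. The paper sidesteps all of this by imposing a \emph{second} minimality condition --- among minimal-weight representations, also minimise the large-coefficient set $T$ --- and handling linearly dependent $T$ directly with the $\mathcal{Z}^{\dag}$ machinery: a circuit supported on $T \cup \{0\}$ either has positive $0$-coordinate (reducing the weight, a contradiction) or zero $0$-coordinate (shrinking $T$ at constant weight, contradicting the second minimality). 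As written, your proof has a missing lemma at its crux; either supply the circuit argument above or adopt the paper's double minimisation.

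A smaller point: your split leaves the elements of $B \setminus B'$ (points of the facet with small coefficients) inside $u$, which only gives $u \in M'A$ with $M' = (\vert A\vert - 1 - \vert B'\vert)(\vol^{\dag,\max}(H(A))-1)$. Since $\vert B\vert \geqslant \vert B'\vert$ this is $\geqslant M$, i.e.\ the inequality goes the wrong way for the stated bound. The fix is to place every $a \in A \cap F$ into $w$ (harmless for $w \in \mathcal{P}(B \cup \{0\})$), so that $u$ ranges over the $\vert A\vert - 1 - \vert B\vert$ elements of $A \setminus (B \cup \{0\})$, each with coefficient at most $\vol^{\dag,\max}(H(A)) - 1$.
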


\begin{proof}
By definition we can write $v \in \mathcal{P}(A)$ as a $\mathbb{Z}_{ \geqslant 0}$-linear combination of the $a\in A$, so we select the representation
\[
v = \sum_{a \in A\setminus \{0\}} \eta_a a \text{ where each } \eta_a \in \mathbb{Z}_{ \geqslant 0}
\] 
for which $\wt(\eta)$ is minimal, and then for which 
\[
T = T((\eta_a)_a) := \{a \in A \setminus \{0\}: \, \eta_a \geqslant \vol^{\dagger, \max}(H(A))\}
\] 
 is also minimal. If $T$ is contained in an outer facet $F$ of $H(A)$ then we obtain the desired decomposition $v=u+w$, where 
\[
u := \sum_{a \in A \setminus (B \cup \{0\})} \eta_a a \in MA \quad \text{and} \quad w:=\sum_{b\in B: = A \cap F}  \eta_b b,
\] 
since $ \eta_a\leqslant \vol^{\dagger, \max}(H(A))-1$ for all $a \in A \setminus (B \cup \{0\})$, and $| A \setminus (B \cup \{0\})|=\vert A\vert - 1 - \vert B\vert$.
  
Henceforth we may assume that  no such facet $F$ exists, so that $T \neq \emptyset$. We obtain a contradiction as follows.

\textbf{Case I:} If the elements of $T$ are linearly independent then we apply Lemma \ref{Lemma:reduction_step} with $S:= T$ to obtain another representation of $v$,
\[
v = \sum_{a \in A\setminus \{0\}} \eta'_a a, \quad \text{where} \quad \eta'_a:= \begin{cases} \eta_a - \lambda_a+\rho_a &\text{if } a\in T;\\
\eta_a + \rho_a &\text{otherwise.}\end{cases}
\]
The coefficients $\eta_a'$ are all non-negative since each $\eta_a , \rho_a\geqslant 0$ and 
\[
\lambda_t \leqslant \vol^{\dagger,\max}(H(A)) \leqslant \eta_t \text{ for all } t \in T. 
\]
However
\[
\wt(\eta') = \wt(\eta) + \wt(\rho) - \wt(\lambda) < \wt(\eta),
\]
contradicting the minimality of $\wt(\eta)$. 
  
\textbf{Case II:} Otherwise the elements of $T$ are linearly dependent and so there exist $z_t \in \mathbb{Z}$, not all zero, for which
  \[ \sum\limits_{t \in T} z_t t = 0. \]  
Define $z_0:=- \sum_{t \in T} z_t$, and multiply through all the $z_v$-values by $-1$ if necessary to ensure that $z_0\geqslant 0$.
As usual, we define $z_a=0$ for all $a\in A$ on which it is not yet defined, so we can consider $z$ as a non-zero element of $\mathbb{Z}^A$ with $z \in \mathcal{Z}$. By Lemma \ref{Lemma circuits exist} there then exists $\mu \in \mathcal{Z}^{\dagger}$ with $\supp(\mu^{\pm}) \subset \supp(z^{\pm})$, and by Lemma \ref{Lemma circuit height bound} we have 
$\lVert \mu\rVert_{\infty} \leqslant \vol^{\dagger,\max}(H(A))$.
  
\textbf{Case IIa:} If $\mu_0 \neq 0$ then we must have $\mu_0 > 0$ since $\supp(\mu^\pm) \subset \supp(z^\pm)$ and $z_0 > 0$.  Now write $v = \sum_{a \in A} \eta'_a a$, where $\eta'_a=\eta_a+\mu_a$ for $a \neq 0$ and $\eta'_0 = 0$.  We have $\eta_a' \geqslant 0$ for all $a$, since $\eta_a'$ agrees with $\eta_a \geqslant 0$ unless $a \in T$, in which case $\eta_a \geqslant \vol^{\dagger, \max}(H(A))$ and $\mu_a \geqslant -\lVert \mu \rVert_\infty \geqslant -\vol^{\dagger, \max}(H(A))$.  But we also have
\[
\wt(\eta') = \wt(\eta) + \wt(\mu) - \mu_0 = \wt(\eta) - \mu_0 < \wt(\eta),
\]
contradicting the minimality of $\wt(\eta)$. 
 
\textbf{Case IIb:} Otherwise $\mu_0 = 0$. Then pick $n \in \mathbb{N}$ maximal such that $\eta' := \eta - n \mu$ has all components non-negative.  We obtain $v = \sum_{a \in A} \eta'_a a$ and $\wt(\eta') = \wt(\eta)$.  But we must have $\eta'_t < \vol^{\dagger,\max}(H(A))$ for some $t\in T$, otherwise we can increase $n$, so $ T(\eta')$ must be a proper subset of $T$, contradicting minimality of $T$.
\end{proof}

In order to leverage the decomposition $v= u+w$ to show that $v \in NA$, we need to control how negative the evaluation $\beta(u)$ can get, when $\beta$ defines an outer facet of $H(A)$. This is the purpose of the next lemma.  Recall from Definition \ref{def: kappa} and Remark \ref{rmk: kappa} that
\[
\kappa(A) = \max_F \frac{\max_a g_F(a)}{\min_a g_F(a)},
\]
where $F$ ranges over facets of $H(A)$, $a$ ranges over points of $A \setminus F$, and $g_F$ is any affine-linear function $\mathbb{R}^d \to \mathbb{R}$ which vanishes on $F$ and is strictly positive on $H(A) \setminus F$.

\begin{Lemma}[Negative coefficients]
\label{Lemma: negative coefficients}
Let $A \subset \mathbb{R}^d$ be finite with $0 \in \ex(H(A))$ and $\spn(A) = \mathbb{R}^d$. Let $\beta: \mathbb{R}^d \to \mathbb{R}$ be a linear map for which $F = \{x \in H(A): \, \beta(x) =1\}$ is an outer facet of $H(A)$. Then, for all $a \in A$,\[\beta(a) \geqslant 1 - \kappa(A).\]
\end{Lemma}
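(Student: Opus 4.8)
The plan is to bound $\beta(a)$ from below for an arbitrary $a\in A$ by relating $\beta$ to a function of the form $g_F$ that appears in the definition of $\kappa(A)$. First I would dispose of the case where $a$ lies on the outer facet $F$: then $\beta(a)=1\geqslant 1-\kappa(A)$ since $\kappa(A)\geqslant 1$ (the max over $a$ is at least the min). So assume $a\in A\setminus F$.

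The key observation is that the affine-linear function $x\mapsto 1-\beta(x)$ vanishes on $F$ and, because $\beta(x)\leqslant 1$ for all $x\in H(A)$ with equality exactly on $F$, it is strictly positive on $H(A)\setminus F$. Hence it is a legitimate choice of $g_F$ in Remark \ref{rmk: kappa}, so by the definition of $\kappa(A)$ we have
\[
\frac{\max_{a'\in A\setminus F}(1-\beta(a'))}{\min_{a'\in A\setminus F}(1-\beta(a'))}\leqslant \kappa(A).
\]
Now I would use the presence of the origin: since $0\in\ex(H(A))\subset A$ and $0\notin F$ (as $F$ is an outer facet, defined by $\beta=1$, while $\beta(0)=0\neq 1$), the point $0$ is one of the competitors in the minimum, giving $\min_{a'\in A\setminus F}(1-\beta(a'))\leqslant 1-\beta(0)=1$. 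Therefore $1-\beta(a)\leqslant \max_{a'\in A\setminus F}(1-\beta(a'))\leqslant \kappa(A)\cdot\min_{a'}(1-\beta(a'))\leqslant \kappa(A)$, which rearranges to $\beta(a)\geqslant 1-\kappa(A)$, as desired.

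The only subtlety — and the main thing to be careful about — is checking that $x\mapsto 1-\beta(x)$ really is strictly positive on all of $H(A)\setminus F$, not merely on $A\setminus F$. This follows from the structure theorem for polytopes quoted earlier in the section: $F$ being an outer facet means $F=\{x\in H(A):\beta(x)=1\}$ with $\beta(x)\leqslant 1$ throughout $H(A)$, so $1-\beta>0$ precisely off $F$. With that in hand the argument is essentially a one-line manipulation of the definition of $\kappa(A)$, using $0\in A\setminus F$ to pin down the denominator. I would also note explicitly that $\kappa(A)\geqslant 1$ always, so the bound is vacuous but still correct when $a\in F$.
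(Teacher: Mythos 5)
Your proof is correct, and it is precisely the ``less explicit'' argument that the paper itself sketches in the Remark immediately following its proof of Lemma \ref{Lemma: negative coefficients}. The paper's own proof reaches the identity $\beta(a) = 1 - g_F(a)/g_F(0)$ by writing $\beta$ in coordinates, expanding via cofactors, and recognising the resulting expression as the determinant $\det(b^{(1)}-a,\dots,b^{(d)}-a)$; it then uses $g_F(0) \geqslant \min_a g_F(a)$ exactly as you do. You short-circuit the computation by noting that $1-\beta$ is itself an admissible choice of $g_F$ in the sense of Remark \ref{rmk: kappa} (affine-linear, vanishing on $F$, strictly positive on $H(A)\setminus F$ by the structure theorem), so the ratio appearing in $\kappa(A)$ can be computed with $g_F = 1-\beta$ directly; combined with $0 \in A\setminus F$ this pins the denominator at $1-\beta(0)=1$. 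This is shorter and avoids the determinant manipulation entirely, at the cost of leaning on the (true, but worth a sentence) fact that the ratio in Definition \ref{def: kappa} is independent of the choice of $g_F$, since any two admissible $g_F$ differ by a positive scalar. Your explicit treatment of the case $a \in F$ via $\kappa(A)\geqslant 1$ is a small point the paper leaves implicit, and it is handled correctly.
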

\begin{proof}
The facet $F$ of the $d$-dimensional convex polytope $H(A)$ is the convex hull of at least $d$ points of $A$ (see \cite[Lemma A.2 (4)]{GSW23} for a discussion). In particular there are $d$ linearly independent $b^{(1)},\dots,b^{(d)} \in A$ for which $\beta(b^{(j)}) = 1$ (and these uniquely determine $\beta$). Let $b^{(j)}_i$ denote the $i^{th}$ coordinate of $b^{(j)}$ with respect to the standard basis, and for $a \in A$ let $a_i$ denote the $i^{th}$ coordinate with respect to the standard basis. Expressing $\beta$ in coordinates and computing the necessary matrix inverses, we derive \[ \beta(a) = \frac{1}{\det B_{\mat}} \sum_{i,j \leqslant d} (-1)^{i+j}a_i M_{ij},\] where $B_{\mat}$ is the $d$-by-$d$ matrix with $(B_{\mat})_{ij} = b^{(j)}_i$, and $M_{ij}$ is the minor formed by deleting the $i^{th}$ row and $j^{th}$ column of $B_{\mat}$  and taking the determinant. Yet \[\det B_{\mat} - \sum_{i,j \leqslant d} (-1)^{i+j}a_i M_{ij} = \det \left( \begin{matrix}
1 & 1 & \cdots & 1 \\
a_1 & b^{(1)}_1 & \cdots & b^{(d)}_1 \\
\vdots & \vdots & & \vdots \\
a_d & b^{(1)}_d & \cdots & b^{(d)}_d \\
\end{matrix}\right),\] as can be seen from expanding the determinant along the top row, and from column operations we have \[\det \left( \begin{matrix} 1 & 1 & \cdots & 1 \\
a_1 & b^{(1)}_1 & \cdots & b^{(d)}_1 \\
\vdots & \vdots & & \vdots \\
a_d & b^{(1)}_d & \cdots & b^{(d)}_d
\end{matrix}\right) = \det(b^{(1)} - a, \cdots, b^{(d)} - a).\]
Letting $g_F(a) = \det(b^{(1)} - a, \cdots, b^{(d)} - a)$, and assuming that the $b^{(j)}$ are ordered so that $\det B_{\mat} = g_F(0)$ is positive, we obtain
\[ \beta(a) = 1 - \frac{\det(b^{(1)} - a, \cdots, b^{(d)} - a)}{\det B_{\mat}} = 1 - \frac{g_F(a)}{g_F(0)} \geqslant 1 - \kappa(A),\] as required. 
\end{proof}

\begin{Remark}
\emph{Less explicitly, one can argue that $1-\beta$ and $\frac{g_F}{g_F(0)}$ are the unique affine-linear functions $\mathbb{R}^d \to \mathbb{R}$ which vanish on $F$ and map $0$ to $1$, so they must agree.}
\end{Remark}

We can now deduce part of Theorem \ref{Theorem:improved_structural_threshold}:

\begin{proof}[Proof of bound \eqref{eq:second_structure_bound}]
Let \[v \in (NH(A) \cap \mathbb{Z}^d) \setminus \Big( \bigcup\limits_{b \in \ex(H(A))} (bN - \mathcal{E}(b-A))\Big)\] 
where $N\geqslant (d+1)N_0$ with 
$ N_0: = \kappa(A)(\vert A\vert - d - 1) \vol^{\dag,\max}(H(A))$.

Since $v \in NH(A)$ there is some subset $S= \{s_0,\dots,s_{d}\} \subset \ex(H(A))$ with $v \in NH(S)$, by Caratheodory's theorem \cite[Corollary 2.5]{Br83}. Writing $v = \sum_{s \in S} c_s s$, with $c_s \geqslant 0$ for all $s$ and $\wt(c) = N$, there must be some $c_s \geqslant N_0$ as $N \geqslant (d+1)N_0$. By re-labelling the vectors in $S$ we may assume that $c_{s_0} \geqslant N_0$, and then
\[
 v^\prime: = s_0N - v = \sum_{s \in S \setminus \{s_0\}} c_s(s_0 - s) \in (N - c_{s_0})H(s_0 - S) \subset (N - N_0) H(s_0 - S).
 \]
Letting $A^\prime = s_0 - A$, we have $s_0 - S \subset A^\prime$, so by the preceding equation $v^\prime$ is contained in $(N - N_0)H(A^\prime)$.  We also have, by assumption, that $v \notin s_0 N - \mathcal{E}(s_0 - A)$, so $v' \notin \mathcal{E}(s_0 - A) = \mathcal{E}(A')$. We conclude that $v' \in \mathcal{P}(A')$. Note also that $\vert A^\prime\vert = \vert A\vert$, $\vert \ex(H(A^\prime)) \vert = \vert \ex(H(A))\vert$, $\vol(H(A^\prime)) = \vol(H(A))$, $\vol^{\dag,\max}(A^\prime) = \vol^{\dag,\max}(A)$, and the same for $\min$.  

Now apply Lemma \ref{Lemma:regular representation} to $v^\prime$, with $A^\prime$ in place of $A$.  We see that there exists an outer facet $F$ of $H(A^\prime)$ such that we can write $v'=u+w$, where
\[
w=\sum_{b\in B} \lambda_b b \quad \text{and} \quad u=\sum\limits_{a \in A^\prime} \eta_a a
\]
with $B: = A^\prime \cap F$, all $\eta_a,\lambda_b \in \mathbb{Z}_{ \geqslant 0}$, and 
\begin{equation} \label{eq: step on the way}
\wt(\eta) \leqslant (\vert A^\prime\vert  - \vert B\vert-1)(\vol^{\dag,\max}(H(A^\prime)) - 1) \leqslant  (\vert A\vert - d-1 ) \vol^{\dag,\max}(H(A)).
\end{equation} 
The second inequality here uses that $\vert B\vert \geqslant d$, which follows from the fact that every facet of the $d$-dimensional convex polytope $H(A^\prime)$ is the convex hull of at least $d$ points of $A^\prime$.

We know that $F=\{x \in H(A^\prime): \, \beta(x) = 1\}$ for some linear map $\beta:\mathbb{R}^d\to \mathbb{R}$. 
As $v^\prime \in (N-N_0)H(A^\prime)$ we have
\[
N-N_0 \geqslant \beta(v) = \beta(u)+\beta(w) = \sum\limits_{a \in A^\prime} \eta_a \beta(a) + \wt(\lambda)
\]
as $\beta(b)=1$ for each $b\in B$. Moreover, combining \eqref{eq: step on the way} with Lemma \ref{Lemma: negative coefficients} applied to $A^\prime$ gives
\[
\wt(\eta) - \sum_{a \in A^\prime} \eta_a \beta(a) = \sum\limits_{a \in A^\prime} \eta_a (1-\beta(a))\leqslant  \kappa(A) \wt(\eta) \leqslant N_0.
\]
Summing the last two inequalities we then obtain
\[ 
\wt(\eta) + \wt(\lambda) \leqslant N
\]
and so $v^\prime \in NA^\prime$. Therefore $v=s_0N -v^\prime\in s_0N - NA^\prime=N(s_0-A^\prime)=NA$ as required. 
\end{proof}

It remains to prove the bound \eqref{eq:first_structure_bound}, which separates the contribution from $\ex(H(A))$. To effect this separation, we begin with an argument about triangulating polytopes. 

\begin{Lemma}[Splitting $A$ into simplices centred at the origin]
\label{Lemma:simplexdecomp}
Let $A \subset \mathbb{Z}^d$ be finite with $0 \in \ex(H(A))$ and $\operatorname{span}(A) = \mathbb{R}^d$. Then $H(A)$ may be partitioned as a finite union of simplices $\cup_{j} H(B^{(j)} \cup \{0\})$, where each $B^{(j)} \subset \ex(H(A))$ is a basis of $\mathbb{R}^d$, and for each $i \neq j$ the set $H(B^{(i)} \cup \{0\}) \cap H(B^{(j)} \cup \{0\})$ is contained in a subspace of dimension at most $d-1$. In particular, $H(B^{(i)} \cup \{0\}) \cap H(B^{(j)} \cup \{0\})$ has zero measure. 
\end{Lemma}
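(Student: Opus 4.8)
The plan is to triangulate the polytope $H(A)$ using only the vertices $\ex(H(A))$, placing the cone point at the origin $0$, which is itself a vertex. The key tool is the standard \emph{pulling triangulation} (or equivalently a placing/pushing triangulation) of a polytope with respect to a chosen vertex. I would first handle the facets of $H(A)$: each facet $F$ not containing $0$ is a $(d-1)$-dimensional polytope whose vertices lie in $\ex(H(A))$, so by induction on dimension (or by directly invoking a pulling triangulation of the boundary complex) we may triangulate $F$ into $(d-1)$-simplices $\sigma$ with vertices in $\ex(H(A)) \cap F$ and pairwise intersections of dimension at most $d-2$. Coning each such $\sigma$ to the origin produces a $d$-simplex $H(B \cup \{0\})$, where $B$ is the vertex set of $\sigma$; since $0 \notin \spn(F)$ (as $0$ is a vertex and $F$ a facet not through $0$), the set $B \cup \{0\}$ is affinely independent, hence $B$ is a basis of $\mathbb{R}^d$.

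Next I would argue that these cones $\{H(B^{(j)} \cup \{0\})\}$ cover $H(A)$ and have overlaps of measure zero. For the covering: any point $x \in H(A)$ lies on the segment from $0$ to some boundary point $y$ of $H(A)$; that $y$ lies either in a facet through $0$ (in which case $x$ lies in a lower-dimensional set, but more carefully one takes the ray from $0$ through $x$ and lets $y$ be where it exits $H(A)$, which lies in \emph{some} facet $F$) — and the facets through $0$ contribute nothing of full dimension, so generically $y$ lies in a facet $F$ not containing $0$, hence in some simplex $\sigma$ of the triangulation of $F$, so $x \in H(\sigma \text{'s vertices} \cup \{0\})$. For the overlap: if $i \neq j$ then either the simplices come from triangulating the same facet $F$, in which case $H(B^{(i)} \cup \{0\}) \cap H(B^{(j)} \cup \{0\})$ is the cone over $\sigma_i \cap \sigma_j$, which has dimension $\leqslant d-1$; or they come from different facets $F \neq F'$, in which case the intersection is contained in $C_F \cap C_{F'}$ where $C_F$ is the cone over $F$, and this is contained in the cone over $F \cap F'$, a face of dimension $\leqslant d-2$, whose cone has dimension $\leqslant d-1$. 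Either way the intersection lies in a subspace (or at worst a finite union of subspaces, which one can intersect down, or one simply notes it lies in the affine span of a set of $\leqslant d$ of the points together with $0$) of dimension at most $d-1$, hence has zero Lebesgue measure.

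To make the overlap statement cleanly say "contained in a subspace of dimension at most $d-1$" rather than a union, I would note that $H(B^{(i)} \cup \{0\}) \cap H(B^{(j)} \cup \{0\})$ is a convex polytope (intersection of two polytopes) of dimension at most $d-1$ by the measure-zero argument above, and any $(d-1)$-dimensional or lower convex set is contained in its affine hull, which is an affine subspace of dimension $\leqslant d-1$; since it contains $0$ it is in fact a linear subspace of dimension $\leqslant d-1$. This handles the "In particular" sentence for free.

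The main obstacle I anticipate is the bookkeeping in the induction that produces a \emph{compatible} triangulation of the whole boundary (so that simplices from adjacent facets glue along matching lower-dimensional simplices), rather than triangulating each facet independently and risking mismatches on shared ridges. The clean fix is to invoke a global pulling triangulation of $H(A)$ itself relative to the vertex $0$: pulling at $0$ automatically yields exactly the cones over the simplices of a pulling triangulation of each facet not containing $0$, with all coherence conditions satisfied by construction, and with all simplex vertices drawn from $\ex(H(A))$. This is entirely standard (see e.g. the theory of regular/pulling triangulations in de Loera--Rambau--Santos), so I would cite it and spend the bulk of the written proof verifying the two concrete properties actually needed downstream — that each $B^{(j)}$ is a basis, and that the pairwise intersections have zero measure.
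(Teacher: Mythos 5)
Your proposal is correct and is essentially the paper's own argument: induct on dimension, triangulate each facet not containing $0$ into $(d-1)$-simplices with vertices in $\ex(H(A))$, cone these to the origin, and check covering via the exit point of the ray from $0$ and disjointness via the rescaling/affine-hull argument (the compatibility-across-ridges issue you flag is not actually needed, since the lemma only asks for measure-zero pairwise overlaps, not a simplicial complex). The only wording to tighten is the word ``generically'' in the covering step: the exit point of the ray from the vertex $0$ through any $x \neq 0$ \emph{always} lies in a facet not containing $0$, since any facet through $0$ that contains the exit point contains the entire ray and so cannot be the binding constraint.
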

\noindent When $d=2$, this is the obvious statement that any polygon with a vertex at the origin may be decomposed into disjoint triangles, all of which have a common vertex at the origin. 
\begin{proof}
The $d=1$ case is trivial, so assume that $d \geqslant 2$. We will induct on dimension. Let $F_1,\dots,F_K$ denote the list of outer facets of $H(A)$. Each $F_i$ is a convex polytope of dimension $d-1$, generated by points in $\ex(H(A))$. Therefore, by the induction hypotheses, one may decompose $F_i$ as a union of $(d-1)$-dimensional simplices of the form $H(\{a_1,\dots,a_d\})$, where $\{a_1,\dots,a_d\} \subset \ex(H(A))$ is linearly independent and the intersection of any two of these simplices is contained in an affine subspace of dimension at most $d-2$. (In fact one may further assume that there is a common vertex $a_1$ to all these simplices, but that will not be necessary for the induction step.) 

Choose $B^{(j)}$ to be the list of such sets $\{a_1,\dots,a_d\}$, taken over all the facets $F_1,\dots,F_K$. We claim that these $B^{(j)}$ satisfy the requirements of the lemma. By construction, each $B^{(j)} \subset \ex(H(A))$ is a basis of $\mathbb{R}^d$. To show that the union of the $H(B^{(j)} \cup \{0\})$ is $H(A)$, fix $x \in H(A)\setminus\{0\}$ and pick (the unique) $\lambda_x \geqslant 1$ such that $\lambda_x x \in \cup_K F_K$.  Then $\lambda_x x \in H(B^{(j)})$ for some $B^{(j)}$. Thus there exist coefficients $c_b$ for $b \in B^{(j)}$ such that $c_b \geqslant 0$, $\wt(c) = 1$, and
\[ \lambda_x  x = \sum_{b \in B^{(j)}} c_b b.\] 
We therefore have
\[ 
x = \Big(1 - \frac{1}{\lambda_x}\Big) 0 +  \sum\limits_{b \in B^{(j)}} \frac{c_b}{\lambda_x} b  \in H(B^{(j)} \cup \{0\}),
\]
as wanted.

It remains to show that each intersection $H(B^{(i)} \cup \{0\}) \cap H(B^{(j)} \cup \{0\})$ is contained in a subspace of dimension at most $d-1$.  So fix an arbitrary non-zero $x \in H(B^{(1)} \cup \{0\}) \cap H(B^{(2)} \cup \{0\})$.  There are coefficients $c_i^{(1)}, c_i^{(2)} \geqslant 0$ with
\[ 
x = \sum_{i \leqslant d} c_i^{(1)} b^{(1)}_i = \sum_{i \leqslant d} c_i^{(2)} b^{(2)}_i
\] 
and  $0< \wt(c^{(1)}), \wt(c^{(2)}) \leqslant 1$. Letting $\wt_j$ denote $\wt(c^{(j)})$, and assuming WLOG that $\wt_2 \geqslant \wt_1$, we can re-scale to obtain
 \[
 y := \frac{x}{\wt_2} = \sum_{i \leqslant d} \frac{c_i^{(1)}}{\wt_2} b_i^{(1)} = \sum_{i \leqslant d} \frac{c_i^{(2)}}{\wt_2} b_i^{(2)},
 \] 
which lies in $H(B^{(1)} \cup \{0\})\cap H(B^{(2)})$ since $\wt(\frac{c^{(1)}}{{\wt_2}}) \leqslant 1$ and $\wt(\frac{c^{(2)}}{{\wt_2}}) = 1$.
 
Suppose for contradiction that $\wt(\frac{c^{(1)}}{{\wt_2}}) <  1$, so $(1+\eps)y\in H(B^{(1)} \cup \{0\}) \subset H(A)$ for all sufficiently small $\varepsilon>0$. Let $B^{(2)}$ be a subset of the outer facet defined by the linear map $\beta^{(2)}: \mathbb{R}^d \to \mathbb{R}$, so that 
 $B^{(2)} \subset \{ u \in \mathbb{R}^d: \, \beta^{(2)}(u) = 1\}$ and  $H(A) \subset \{u \in \mathbb{R}^d: \, \beta^{(2)}(u) \leqslant 1\}$. 
 Then for all sufficiently small $\varepsilon > 0$ we have
  \[ 1\geqslant \beta^{(2)}((1+\eps)y) = (1+\eps) \beta^{(2)}(y) = 1+\eps  > 1.\]
This gives the desired contradiction, and we deduce that  $\wt(\frac{c^{(1)}}{{\wt_2}}) = 1$.  So 
\[ y \in H(B^{(1)}) \cap H(B^{(2)}),\]
and because $x=\wt_2 y$ for some $\wt_2\in [0,1]$ we conclude that 
 \[ 
 H(B^{(1)} \cup \{0\}) \cap H(B^{(2)} \cup \{0\}) = H((H(B^{(1)}) \cap H(B^{(2)})) \cup \{0\}) .
 \] 
 Hence $H(B^{(1)} \cup \{0\}) \cap H(B^{(2)} \cup \{0\})$  is contained in a subspace of dimension at most $d-1$  by the induction hypothesis.
\end{proof}
Using this decomposition, we can generalise an additive combinatorial argument from \cite{GSW23} and \cite{CG21} (which was applied when $H(A)$ was a $d$-simplex).

\begin{Lemma}[Restricting the influence of non-extremal elements]
\label{Lemma:minelement_decomp}
Let $A \subset \mathbb{Z}^d$ be a finite set with $0 \in \ex(H(A))$ and $\operatorname{span}(A) = \mathbb{R}^d$. Then there exists a finite set $S = d! \vol(H(A)) A$ for which \[\mathcal{P}(A) = S + \mathcal{P}(\ex(H(A))).\]
\end{Lemma}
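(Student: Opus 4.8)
The plan is to prove the two inclusions of $\mathcal{P}(A) = S + \mathcal{P}(\ex(H(A)))$ separately, taking $S := MA$ with $M := d!\vol(H(A)) \in \mathbb{Z}_{\geqslant 0}$, and writing $E := \ex(H(A))$. The inclusion $MA + \mathcal{P}(E) \subseteq \mathcal{P}(A)$ is immediate: $MA \subseteq \mathcal{P}(A)$, $\mathcal{P}(E) \subseteq \mathcal{P}(A)$ since $E \subseteq A$, and $\mathcal{P}(A)$ is closed under addition. So the work is in showing $\mathcal{P}(A) \subseteq MA + \mathcal{P}(E)$. Since $0 \in E$, the cone $C_A$ is pointed; I would use the resulting partial order on $\mathbb{Z}^d$ defined by $x \preceq y$ iff $y - x \in \mathcal{P}(E)$.

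Fix $v \in \mathcal{P}(A)$. The set $\{x \in \mathcal{P}(A) : v - x \in \mathcal{P}(E)\}$ is nonempty (it contains $v$) and finite (it consists of lattice points in the bounded region $C_A \cap (v - C_A)$, which is bounded because $C_A$ is pointed and $\mathcal{P}(E) \subseteq C_E = C_A$), so I can pick a $\preceq$-minimal element $s$ and put $p := v - s \in \mathcal{P}(E)$; it then suffices to prove $s \in MA$. Minimality of $s$ means precisely that $s \notin \mathcal{P}(A) + (\mathcal{P}(E)\setminus\{0\})$. In particular, if a representation $s = \sum_{a \in A} \eta_a a$ with $\eta_a \in \mathbb{Z}_{\geqslant 0}$ used a vertex $b \in E \setminus \{0\}$ with $\eta_b \geqslant 1$, then $s - b \in \mathcal{P}(A)$ and $b \in \mathcal{P}(E)\setminus\{0\}$, contradicting minimality; hence $s$ has a representation $s = \sum_{a \in A \setminus E} \eta_a a$ supported on non-extremal points, and I fix one with $N := \wt(\eta) = \sum_a \eta_a$ as small as possible.

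The main step is $N \leqslant M$. Suppose $N > M$. By Lemma \ref{Lemma:simplexdecomp} write $H(A) = \bigcup_j H(B^{(j)} \cup \{0\})$ with each $B^{(j)} \subseteq E$ a basis of $\mathbb{R}^d$ and the pieces pairwise overlapping in measure zero; let $m_j := \vert \det B^{(j)}_{\mat} \vert = [\mathbb{Z}^d : \Lambda_{B^{(j)}}] = d!\vol(H(B^{(j)} \cup \{0\}))$, so $\sum_j m_j = d!\vol(H(A)) = M$. Assign to each non-extremal $a$ occurring in the fixed representation of $s$ (with multiplicity $\eta_a$) an index $j(a)$ with $a \in H(B^{(j(a))} \cup \{0\})$, possible since the simplices cover $H(A)$. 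Then $\sum_j \big( \sum_{a: j(a) = j} \eta_a \big) = N > M = \sum_j m_j$, so some index $j_1$ has strictly more than $m_{j_1}$ of these points (counted with multiplicity) assigned to it; list them with multiplicity as $a^{(1)}, \dots, a^{(K)}$ with $K \geqslant m_{j_1} + 1$, and set $w_0 := 0$ and $w_k := a^{(1)} + \dots + a^{(k)} \in \mathbb{Z}^d$. The $K + 1 \geqslant m_{j_1} + 2$ classes of $w_0, \dots, w_K$ in the order-$m_{j_1}$ group $\mathbb{Z}^d / \Lambda_{B^{(j_1)}}$ cannot all be distinct, so $p' := w_{i'} - w_i = a^{(i+1)} + \dots + a^{(i')} \in \Lambda_{B^{(j_1)}}$ for some $i < i'$. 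Since each $a^{(k)}$ lies in $H(B^{(j_1)} \cup \{0\}) \subseteq C_{B^{(j_1)}}$, we get $p' \in \Lambda_{B^{(j_1)}} \cap C_{B^{(j_1)}} = \mathcal{P}(B^{(j_1)} \cup \{0\}) \subseteq \mathcal{P}(E)$, and $p' \neq 0$ as it is a nonempty sum of nonzero vectors in the pointed cone $C_A$. Deleting the copies $a^{(i+1)}, \dots, a^{(i')}$ from the fixed representation of $s$ shows $s - p' \in \mathcal{P}(A)$, whence $s = (s - p') + p' \in \mathcal{P}(A) + (\mathcal{P}(E) \setminus \{0\})$, a contradiction. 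Therefore $N \leqslant M$, so $s$ is a sum of at most $M$ elements of $A$; padding with copies of $0 \in A$ gives $s \in MA$, and $v = s + p \in MA + \mathcal{P}(E)$, as needed.

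The hard part is exactly this bound $N \leqslant M$. The naive move — reducing each non-extremal coefficient modulo the denominator of its barycentric coordinates — only yields a bound on $N$ of size roughly $(\vert A \vert - \vert E \vert)\,M$, which is too weak by the factor $\vert A \vert - \vert E \vert$. The point of the triangulation of $H(A)$ into origin-simplices (Lemma \ref{Lemma:simplexdecomp}), together with the pigeonhole on partial sums modulo a finite-index sublattice, is that it lets one strip off an entire nonzero element of $\mathcal{P}(\ex(H(A)))$ in a single move as soon as $N$ exceeds $M$ — and that is exactly the kind of move that $\preceq$-minimality of $s$ forbids.
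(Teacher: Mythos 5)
Your proof is correct and uses essentially the same argument as the paper: the triangulation of $H(A)$ into origin-simplices from Lemma \ref{Lemma:simplexdecomp}, followed by a double pigeonhole (first on which simplex the summands lie in, then on partial sums modulo $\Lambda_{B^{(j)}}$) to extract a nonzero element of $\mathcal{P}(\ex(H(A)))$ whenever more than $d!\vol(H(A))$ summands are used. The only difference is organizational — you phrase the descent as choosing a $\preceq$-minimal $s$ and deriving a contradiction, whereas the paper runs a direct induction on $N$ — and your write-up is, if anything, slightly more careful about why the extracted partial sum lies in $\mathcal{P}(B^{(j)}\cup\{0\})$ and is nonzero.
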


The proof is similar to (but simpler than) \cite[Lemma 3.2]{GSW23} with the set $B := \ex(H(A))$. 

\begin{proof}
Let $v \in NA$. We will show that $v \in S + \mathcal{P}(\ex(H(A)))$ by induction on $N$. For $N \leqslant d! \vol(H(A))$ we have $v \in NA \subset S \subset S + \mathcal{P}(\ex(H(A)))$. 

Suppose that $N > d! \vol(H(A))$. We can write $v = a_1 + a_2 + \cdots + a_{N}$ with each $a_i \in A$. By Lemma \ref{Lemma:simplexdecomp}, there is a  partition $H(A) = \cup_{j} H(B^{(j)} \cup \{0\})$ where each $B^{(j)} \subset \ex(H(A))$. Therefore we can partition $\{1,\dots,N\} = \cup_j T_j$ to obtain
\[ v = \sum_{j} \sum_{i \in T_j} a_i,\] 
where $i\in T_j$ implies that $a_i \in H(B^{(j)} \cup \{0\})$.

Since $\vol(H(A)) = \sum_{j} \vol(H(B^{(j)} \cup \{0\}))$ by Lemma \ref{Lemma:simplexdecomp} there is some $j$ for which 
\[\vert T_j\vert > d! \vol(H(B^{(j)} \cup \{0\}))= \vert \mathbb{Z}^d/\Lambda_{B^{(j)} \cup \{0\} }\vert ,\] 
by the pigeonhole principle.  Reordering the indices on the $a_i$ we   write $T_j = \{1,\dots,\vert T_j\vert\}$. Two of the $\vert T_j\vert$ partial sums 
\[ a_1,a_1 + a_2,\dots,a_1 + a_2 + \cdots + a_{\vert T_j\vert} \text{ mod } \Lambda_{B^{(j)} \cup \{0\} },\] 
must be congruent to each other mod $\Lambda_{B^{(j)} \cup \{0\} }$
by the pigeonhole principle. Their difference yields a non-trivial partial sum $\sum_{i \in I} a_i \equiv 0 \text{ mod } \Lambda_{B^{(j)} \cup \{0\} }$ (where $I \subset T_j$ is a non-empty interval) and so this partial sum can be replaced by a sum of elements from $B^{(j)} \cup \{0\}$. Therefore
 \[
 \sum_{i \in I} a_i \in \mathcal{P}(B^{(j)} \cup \{0\}) \subset \mathcal{P}(\ex(H(A))).
 \] 
 By the induction hypothesis, we have $v - \sum_{i \in I} a_i \in S+ \mathcal{P}(\ex(H(A)))$, and so 
\[v \in S + \mathcal{P}(\ex(H(A))) + \mathcal{P}(\ex(H(A))) \subset S + \mathcal{P}(\ex(H(A)))\] as required. 
\end{proof}

We are now ready to finish the argument by modifying the proof of \eqref{eq:second_structure_bound}.

\begin{proof}[Proof of bound \eqref{eq:first_structure_bound}]
Let
\[
v \in (NH(A) \cap \mathbb{Z}^d) \setminus \Big( \bigcup\limits_{b \in \ex(H(A))} (bN - \mathcal{E}(b-A))\Big),
\]
where $N\geqslant (d+1)N_0$ with
\[
 N_0: = \kappa(A) \Big(d!\vol(H(A)) + (\vert\ex(H(A))\vert - d - 1) \vol^{\dag,\max}(H(A))\Big).
\]

As in the proof of \eqref{eq:second_structure_bound} we use  Caratheodory's theorem to determine some $s_0\in \ex(H(A))$ for which 
 \[ 
 v^\prime: = s_0N - v   \in (N - N_0)H(A^\prime)\cap \mathcal{P}(A^\prime),
 \] 
where $A^\prime:= s_0 - A$. By Lemma \ref{Lemma:minelement_decomp} applied to $A^\prime$, we may write $v^\prime = y + x $ where $y \in d! \vol(H(A^\prime)) A^\prime$ and $x \in \mathcal{P}(\ex(H(A^\prime))) $. Applying Lemma \ref{Lemma:regular representation} to $x \in \mathcal{P}(\ex(H(A^\prime)))$ (in place of $v\in \mathcal{P}(A)$) we write $x = u + w$, where $w \in \mathcal{P}(B \cup \{0\})$ and $u \in M \ex(H(A'))$, with $B = \ex(H(A')) \cap F$ for some outer facet $F$ of $H(A')$, and $M =(\vert \ex(H(A'))\vert - 1 - \vert B\vert) (\vol^{\dagger, \max}(H(A')) - 1)$.

Now let
\[
z = y+u=\sum\limits_{a \in A^\prime} \rho_aa \quad \text{and} \quad w = \sum_{b \in B} \lambda_b b,
\]
and note that $\rho_a, \lambda_b \in \mathbb Z_{\geqslant 0}$ for all $a$ and $b$.  We obtain $v=z+w$ and
 \begin{align}
\label{eq:bound_on_lambdasum_2}
\wt(\rho)  \leqslant d! \vol(H(A)) + (\vert \ex(H(A))\vert - 1 - d)\vol^{\dag,\max}(H(A)),
\end{align} 
using $\vert \ex(H(A^\prime))\vert=\vert \ex(H(A))\vert$, $\vol^{\dag,\max}(H(A^\prime))=\vol^{\dag,\max}(H(A))$, and $\vert B\vert \geqslant d$, as in the proof of \eqref{eq:second_structure_bound}.  The outer facet $F$ is given by $\{x \in H(\ex(H(A^\prime))): \, \beta(x) = 1\}$ for some linear $\beta:\mathbb{R}^d\to \mathbb{R}$ so we again obtain
\[
N-N_0 \geqslant \beta(v) = \beta(z)+\beta(w) = \sum\limits_{a \in A^\prime} \rho_a \beta(a) + \wt(\lambda).
\]
By again applying Lemma \ref{Lemma: negative coefficients} to $A^\prime$, this time using the bound \eqref{eq:bound_on_lambdasum_2} in place of \eqref{eq: step on the way}, we obtain
$\wt(\rho) + \wt(\lambda)  \leqslant N$
with the modified value for $N_0$,
and so $v^\prime \in NA^\prime$. Therefore $v=s_0N -v^\prime\in s_0N - NA^\prime=N(s_0-A^\prime)=NA$ as required. 
\end{proof}

\bibliographystyle{plain}
\bibliography{bib}

\end{document}